\theoremstyle{change}%
\newtheorem{definition}{Definition:}[section]%
\newtheorem{proposition}[definition]{Proposition:}%
\newtheorem{theorem}[definition]{Theorem:}%
\newtheorem{lemma}[definition]{Lemma:}%
\newtheorem{corollary}[definition]{Corollary:}%
{\theorembodyfont{\rmfamily}\newtheorem{remark}[definition]{Remark:}}%
{\theorembodyfont{\rmfamily}}%
\newenvironment{proof}
  {{\bf Proof:}}
  {\qquad \hspace*{\fill} $\Box$}%
\newcommand{\fg}{\mathfrak{g}}%
\newcommand{\fh}{\mathfrak{h}}%
\newcommand{\rme}{\mathrm{e}}%
\newcommand{\DC}{\mathcal{D}}%
\newcommand{\XC}{\mathcal{X}}%
\newcommand{\FC}{\mathcal{F}}%
\newcommand{\YC}{\mathcal{Y}}%
\newcommand{\N}{\mathbb{N}}%
\newcommand{\R}{\mathbb{R}}%
\begin{document}

\title{Topological conjugacy of linear systems on Lie groups\thanks{This work was partially supported by
CNPq/Universal grant n$^{\circ }$ 476024/2012-9}}
\author{A. Da Silva\thanks{Supported by CAPES grant
n$^{\circ }$ $4792/2016$-PRO and partially supported by FAPESP grant
n$^{\circ }$ $2013/19756-8$}\\Departamento de Matem\'{a}tica, Universidade
de Campinas - Brazil, and \\
Departamento de Matem\'{a}tica, Universidade
Estadual de Maring\'a - Brazil,\\ ajsilva@ime.unicamp.br \and A. J.
Santana\thanks{Partially  supported by Funda\c{c}\~{a}o
Arauc\'{a}ria grant n$^{\circ }$ $20134003$}\\Departamento de
Matem\'{a}tica, Universidade Estadual de Maring\'{a}\\Maring\'{a},
Brazil, ajsantana@uem.br\and S. N. Stelmastchuk\\ Universidade
Federal do Paran\'{a}\\Jand\'{a}ia do Sul, Brazil,
simnaos@gmail.com}

\maketitle

\begin{abstract}
In this paper we study a classification of linear systems on Lie groups with respect to the conjugacy of the corresponding flows. We also describe  stability according to Lyapunov exponents.
\end{abstract}

\textbf{Keywords: } topological conjugacy, linear vector fields, flows, Lie groups, Lyapunov exponents.

\textbf{AMS 2010 subject classification}: 22E20 , 54H20 , 34D20,
37B99

\section{Introduction}

Consider the linear control system in $\R^n$ given by
\begin{equation}\label{linearsystemRn}
  \dot{x} = Ax + \sum_{i=1}^nu_ib_i,
\end{equation}
where $A$ is a $n\times n$ matrix, $(b_1,\ldots,b_n)$ is a vector in $\R^n$ and $u(t) =(u_1(t),\ldots, u_n(t))$ is locally integrable function. This kind of the linear control system has been extensively studied as it can be viewed in Agrachev and Sachkov \cite{agrachev}, Jurdjevic \cite{jurdjevic} and Sontag \cite{sontag}, and the references therein. Commonly, three aspects of control system are studied: controllability, classification and optimality. Our first wish is to study the classification of the linear control systems on connected Lie groups, more specifically, the topological conjugacy. 

Here we follow the work of Ayala and Tirao \cite{ayalatirao}. Inspired by a work due to Markus \cite{markus}, Ayala and Tirao introduced the concept of linear control systems on connected Lie groups. They shown that the generalization of the linear control system (\ref{linearsystemRn}) to a connected Lie group $G$ is determined by the family of differential equations:  
\begin{equation}  \label{contsyst1}
   \dot{g}(t)={\mathcal{X}}(g(t)) + \sum_{i=1}^n u_i(t) X_i(g(t)),
\end{equation}
where $\mathcal{X}$ is an infinitesimal automorphism of $G$, $X_i$ are right invariant vector fields on $G$ and $u(t)=(u_1(t), \ldots, u_n(t)) \in \mathbb{R}^n$ belongs to the class of unrestricted admissible control functions. It is worth note that there are several works about the control system (\ref{contsyst1}) when the drift $\mathcal{X}$ is assumed to be a right invariant vector field. The reader interested can found more about this case, for instance, in the work of Biggs and Remsing \cite{biggsremsing} and the references therein.  

To the linear control system (\ref{contsyst1}) there exist a number of works concerning the controllability problem. Ayala and Tirao studied in \cite{ayalatirao} local controllability problems and the (ad)-rank conditions.  After, Ayala and San Martin \cite{ayalasanmartin} establish controllability results for compact and semisimple Lie groups. In sequence, Jouan \cite{jouan1}, \cite{jouan2} and Jouan and Dath \cite{jouandath} studied about equivalence and controllability of linear control systems. In \cite{jouan1}, the importance of the linear control systems on Lie groups is once more highlighted. It is shown that the class of linear control systems classifies several classes of affine control systems on arbitrary connected manifolds.

Recently, Da Silva \cite{dasilva} and Ayala and Da Silva \cite{ayaladasilva} shown that the dynamics of the flow associated with the drift are intrinsically connected with the behavior of the whole linear control system. They shown that, if the flow associated with the drift has trivial expanding or contracting subgroups, then the control system (\ref{contsyst1}) is controllable.  

Therefore, to know the dynamical properties of such flows is fundamental in order to understand the behavior of the linear control systems. Observing this fact, before studying the topological conjugacy between system of type (\ref{contsyst1}) we view that it is necessary to study the topological conjugacy in a intermediary step, that is, we adopt the linear system on a connected Lie group $G$ given by 
\begin{equation}  \label{syst1}
  \dot{g}(t)={\mathcal{X}}(g(t)),
\end{equation}
 where the drift ${\mathcal{X}}$ is an infinitesimal automorphism. Thus, the main purpose of this paper is to classify linear flows on Lie group
via topological conjugacies and  characterize asymptotic and exponential stability  using Lyapunov exponents. Besides this purpose our work reveals the similarity between (\ref{syst1}) and the linear system in $\mathbb{R}^n$ given by 
\begin{equation}\label{linearsystem}
   \dot{x} = Ax.
\end{equation}
Then following a similar approach of the classical result we classify the linear flows according to the decomposition of the state space in stable, unstable and central Lie subgroups. Moreover, given a fixed point $g \in G$ of the linear flow given by
${\mathcal{X}}$, we define when $g$ is stable, asymptotically stable and exponentially stable. From this we characterize these stabilities according to Lyapunov exponents.

The paper is organized as follows, in the second section we establish some results,  prove that the stable and unstable subgroups are simply connected and characterize the stable and unstable elements as attractors and repellers. In the third section we prove an important result of the paper, that is, given two linear vector fields we give conditions on it and on their stable spaces in order to ensure that their respective flows are conjugated. Finally, in the last section we study the Lyapunov stability.

\section{Stable and unstable Lie subgroups}

In this section we present   notations and basic tools to prove that the stable, unstable components of the state space are simply connected. Moreover in the main result of this section we characterize the stable and unstable elements   as attractors and repellers respectively.

\begin{definition}
  Let $\left\{\varphi_t\right\}_{t\in\R}$ be a flow of automorphisms on a connected Lie group $G$. We say that $\varphi_t$ is {\bf contracting} if there are constants $c, \mu>0$ such that
\[
  \left|\left|(d\varphi_t)_eX\right|\right|\leq c\rme^{\mu t}||X||\;\;\;\;\mbox{ for any }X\in\fg;
\]
We say that $\varphi_t$ is {\bf expanding} if $\left(\varphi_t\right)^{-1}=\varphi_{-t}$ is contracting.
\end{definition}

The next results states that the existence of contracting/expanding flow of automorphisms on a Lie group requires some topological properties.

\begin{proposition}\label{contracting}
Let $G$ be a connected Lie group and $\left(\varphi_t\right)$ a flow of automorphism on $G$. If $\varphi_t$ is contracting or expanding, the Lie group $G$ is simply connected.
\end{proposition}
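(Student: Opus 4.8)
The plan is to compare $G$ with its universal covering group and to show that the contraction forces the fundamental group to collapse. Let $p\colon\tilde{G}\to G$ be the universal covering homomorphism, so that $\Gamma:=\ker p$ is a discrete central subgroup of $\tilde G$ isomorphic to $\pi_1(G)$; proving the proposition amounts to showing $\Gamma=\{e\}$. First I would lift the flow: since $\tilde G$ is simply connected, each homomorphism $\varphi_t\circ p\colon\tilde G\to G$ lifts through $p$ to a unique Lie group homomorphism $\tilde\varphi_t\colon\tilde G\to\tilde G$ with $p\circ\tilde\varphi_t=\varphi_t\circ p$, and applying this to $\varphi_{-t}$ produces an inverse, so each $\tilde\varphi_t$ is an automorphism and $(\tilde\varphi_t)$ is again a flow of automorphisms. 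Differentiating the relation $p\circ\tilde\varphi_t=\varphi_t\circ p$ at $e$ and using that $dp_e$ is a Lie algebra isomorphism identifies $(d\tilde\varphi_t)_e$ with $(d\varphi_t)_e$, so $\tilde\varphi_t$ inherits the contracting (resp. expanding) property from $\varphi_t$.

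In either case the differential $(d\varphi_t)_e$ tends to $0$ in one of the two time directions; after possibly replacing the flow by $\varphi_{-t}$ (which interchanges the two notions) I may assume $(d\varphi_t)_eX\to 0$ as $t\to+\infty$ for every $X\in\fg$, and likewise for $\tilde\varphi_t$. The key step is then to upgrade this infinitesimal contraction at $e$ to genuine pointwise contraction on all of $\tilde G$. For this I would use the intertwining relation $\psi(\exp X)=\exp((d\psi)_eX)$ valid for any automorphism $\psi$, which gives $\tilde\varphi_t(\exp X)=\exp((d\tilde\varphi_t)_eX)\to\exp 0=e$ as $t\to+\infty$. Because $\exp(\fg)$ contains a neighbourhood of $e$ and $\tilde G$ is connected, every $g\in\tilde G$ is a finite product $\exp X_1\cdots\exp X_k$, and continuity of the group multiplication yields $\tilde\varphi_t(g)\to e$ for every $g\in\tilde G$.

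Finally I would exploit the discreteness of $\Gamma$. From $p\circ\tilde\varphi_t=\varphi_t\circ p$ and $\varphi_t(e)=e$ one gets $\tilde\varphi_t(\Gamma)\subseteq\Gamma$. Fixing $\gamma\in\Gamma$ and a neighbourhood $U$ of $e$ with $U\cap\Gamma=\{e\}$, the pointwise contraction $\tilde\varphi_t(\gamma)\to e$ together with $\tilde\varphi_t(\gamma)\in\Gamma$ forces $\tilde\varphi_t(\gamma)=e$ for $t$ large, and injectivity of $\tilde\varphi_t$ gives $\gamma=e$. Hence $\Gamma\cong\pi_1(G)$ is trivial and $G$ is simply connected; the expanding case follows from the contracting one applied to $\varphi_{-t}$.

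I expect the main obstacle to be the globalization step of the second paragraph: the hypothesis only controls the differential at the identity, whereas the conclusion needs $\tilde\varphi_t(g)\to e$ for all $g$. The automorphism property, through the relation $\psi\circ\exp=\exp\circ(d\psi)_e$ and the generation of $\tilde G$ by $\exp(\fg)$, is exactly what transports the infinitesimal data to the whole group; the lifting of the flow is routine functoriality and the concluding discreteness argument is soft.
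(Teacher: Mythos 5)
Your proof is correct and follows essentially the same route as the paper: lift the flow to the universal cover, observe that the deck transformation group is preserved, and use the contraction together with discreteness to kill it. The only difference is that you explicitly justify the globalization step (passing from contraction of $(d\varphi_t)_e$ to $\tilde\varphi_t(g)\to e$ for every $g$, via $\psi\circ\exp=\exp\circ(d\psi)_e$ and generation by exponentials), which the paper simply asserts, and you close with injectivity of $\tilde\varphi_t$ where the paper first argues that the discrete subgroup is fixed pointwise; both are valid.
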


\begin{proof}
Let us assume that $\varphi_t$ is contracting, since the expanding case is analogous. Let $\widetilde{G}$ be the connected simply connected cover of $G$ and consider $D$ to be the central discrete subgroup of $\widetilde{G}$ such that $G=\widetilde{G}/D$.

Since the canonical projection $\pi:\widetilde{G}\rightarrow G$ is a covering map and $\widetilde{G}$ is simply connected, we can lift $\left(\varphi_t\right)_{t\in\R}$ to a flow $\left(\widetilde{\varphi}_t\right)_{t\in\R}$ on $\widetilde{G}$ such that
$$\pi\circ \widetilde{\varphi}_t=\varphi_t\circ\pi, \;\;\mbox{ for all }\;\; t\in\R.$$ 
In particular $\widetilde{\varphi}_t(D)=D$ for all $t\in\R$.  Being that $D$ is discrete and $\widetilde{\varphi}_t$ is continuous, we must have that $\widetilde{\varphi}_t(x)=x$ for all $x\in D$ and $t\in\R$. However, since $\varphi_t$ is contracting, $\widetilde{\varphi}_t$ is also contracting and so, $\widetilde{\varphi}_t(x)\rightarrow \tilde{e}$ when $t\rightarrow\infty$, where $\tilde{e}\in\widetilde{G}$ is the identity element. Being that $D$ is discrete and $\widetilde{\varphi}_t$-invariant, we must have for $x\in D$ and $t>0$ large enough that $\widetilde{\varphi}_t(x)=e$ which implies that $x=e$. Therefore, $D=\{e\}$ and so $\widetilde{G}=G$, which concludes the proof.
\end{proof}

\begin{corollary}\label{compacityproperty}
If $G$ is a compact Lie group it admits no expanding or contracting flow of automorphisms.
\end{corollary}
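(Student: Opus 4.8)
Since expanding means that the inverse flow $\varphi_{-t}$ is contracting, it suffices to show that a compact Lie group $G$ admits no contracting flow of automorphisms; the expanding case then follows verbatim applied to $\varphi_{-t}$. The plan is to exploit compactness not through simple connectedness (that alone would not suffice, since compact simply connected groups do exist) but through the finiteness of Haar measure. The key point I would establish is that every automorphism of a compact group preserves the normalized Haar measure, which pins down the determinant of its differential at the identity and is flatly incompatible with the differential shrinking to zero.

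First I would record that for a flow of automorphisms the differential at the identity is a one–parameter group in $\operatorname{Aut}(\fg)$, so that $(d\varphi_t)_e=\rme^{tD}$ for the derivation $D=\left.\tfrac{d}{dt}\right|_{0}(d\varphi_t)_e$ of $\fg$, whence $\det(d\varphi_t)_e=\rme^{t\,\tr D}$ with $\tr D\in\R$. Next I would argue that each $\varphi_t$ preserves the (normalized) Haar measure $\mu$ of the compact group $G$: the pushforward $(\varphi_t)_*\mu$ is again a left-invariant probability measure, so by uniqueness of Haar measure it equals $\mu$. Using the intertwining relation $\varphi_t\circ L_g=L_{\varphi_t(g)}\circ\varphi_t$ together with left-invariance of $\mu$, the Jacobian of $\varphi_t$ is constant over $G$ and equal in modulus to $\left|\det(d\varphi_t)_e\right|$; measure preservation therefore forces $\left|\det(d\varphi_t)_e\right|=\rme^{t\,\tr D}=1$ for every $t$, i.e. $\tr D=0$.

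The contradiction then comes from the contracting hypothesis. By definition of contracting, $(d\varphi_t)_e\to 0$ in operator norm as $t\to+\infty$ (this is precisely the decay used in the proof of Proposition~\ref{contracting} to push $D$ into the identity), so every eigenvalue of $D$ has negative real part and hence $\tr D<0$, in conflict with $\tr D=0$. This rules out any contracting, and therefore any expanding, flow of automorphisms on $G$.

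The step requiring the most care is the passage from global invariance of Haar measure to the pointwise identity $\left|\det(d\varphi_t)_e\right|=1$, that is, the constancy of the Jacobian of an automorphism via left-invariance; once this is in place the contradiction is immediate. As an alternative route, one could instead invoke Proposition~\ref{contracting} to conclude that such a $G$ would be compact, connected and simply connected, hence semisimple with negative-definite Killing form; since automorphisms preserve the Killing form, $\operatorname{Aut}(\fg)$ would be compact and could contain no contracting one-parameter subgroup. I prefer the determinant argument above, as it is direct and sidesteps the structure theory of compact groups.
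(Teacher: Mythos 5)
Your proof is correct, but it takes a genuinely different route from the paper, which offers no argument at all: the statement is presented as an immediate corollary of Proposition~\ref{contracting}, and you are right that simple connectedness alone cannot be the whole story ($SU(2)$ is compact and simply connected). To make the paper's implicit route work one needs an extra ingredient, e.g.\ that a contracting flow forces all eigenvalues of $\DC$ to have negative real part, hence $[\fg_{\alpha},\fg_{\beta}]\subset\fg_{\alpha+\beta}$ forces $\fg$ nilpotent, so a compact $G$ would be a torus, and a simply connected torus is trivial. Your Haar-measure argument bypasses all of this and is self-contained: $(\varphi_t)_*\mu$ is again a left-invariant probability measure, hence equals $\mu$; the intertwining relation $\varphi_t\circ L_g=L_{\varphi_t(g)}\circ\varphi_t$ makes the Jacobian with respect to a left-invariant volume constant and equal to $\det(\rme^{t\DC})=\rme^{t\tr\DC}$, so $\tr\DC=0$; while contraction forces every eigenvalue of $\DC$ to have negative real part, so $\tr\DC<0$. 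Each step checks out. Three small remarks. First, the printed definition of contracting has $c\rme^{\mu t}$ with $\mu>0$, which must be a typo for $c\rme^{-\mu t}$; you correctly use the intended decay, as the paper itself does in the proof of Proposition~\ref{contracting}. Second, your final inequality $\tr\DC<0$ requires $\dim G\geq 1$, so the trivial group (for which every flow is vacuously contracting) must be tacitly excluded from the statement. Third, there is an even shorter argument in the same spirit that uses only the left-invariant distance $\varrho$ already introduced in the paper: the estimate $\varrho(\varphi_t(g),\varphi_t(h))\leq \|(d\varphi_t)_e\|\,\varrho(g,h)$ gives $\operatorname{diam}(\varphi_t(G))\rightarrow 0$, yet $\varphi_t(G)=G$ for all $t$ since automorphisms are surjective, a contradiction when $G$ is compact and nontrivial. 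Your determinant argument and this diameter argument both buy a proof independent of Proposition~\ref{contracting}; the paper's route, once completed, additionally records the structural fact that such a $G$ would have to be nilpotent.
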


Now we can prove that unstable and stable subgroups of the linear flow are simply connected.  Let us assume that $G$ is a connected Lie group and let $\XC$ be a linear vector field on $G$ with linear flow $(\varphi_t)_{t\in\R}$. Associated with $\XC$ we have the connected $\varphi$-invariant Lie subgroups $G^+$, $G^0$ and $G^-$
with Lie algebras given, respectively, by
$$
  \fg^+=\bigoplus_{\alpha; \mathrm{Re}(\alpha)>0}\fg_{\alpha}, \;\;\;\;\fg^0=\bigoplus_{\alpha; \mathrm{Re}(\alpha)=0}\fg_{\alpha}, \;\;\;\mbox{ and }\;\;\;\fg^-=\bigoplus_{\alpha; \mathrm{Re}(\alpha)<0}\fg_{\alpha},
$$
where $\alpha$ is an eigenvalue of the derivation $\DC$ associated with $\XC$ and $\fg_{\alpha}$ its generalized eigenspace.

We consider also $G^{+, 0}$ and $G^{-, 0}$ as the connected $\varphi$-invariant Lie subgroups with Lie algebras $\fg^{+, 0}=\fg^+\oplus\fg^0$ and $\fg^{-, 0}=\fg^-\oplus\fg^0$ respectively. The next proposition states the main properties of the above subgroups, its proof can be found in \cite{dasilva} and \cite{DaAy2}.

\begin{proposition}
\label{subgroups}
It holds:
\begin{itemize}
\item[1.] $G^{+, 0}=G^+G^0=G^0G^+$ and $G^{-, 0}=G^-G^0=G^0G^-$;
\item[2.] $G^+\cap G^-=G^{+, 0}\cap G^-=G^{-, 0}\cap G^+=\{e\}$;
\item[3.] $G^{+, 0}\cap G^{-, 0}=G^0$;
\item[4.] All the above subgroups are closed in $G$;
\item[5.] If $G$ is solvable then
\begin{equation}
\label{decomposition}
G=G^{+, 0}G^-=G^{-, 0}G^+
\end{equation}
Moreover, the singularities of $\XC$ are in $G^0$;
\item[6.] If $\DC$ is inner and  $G^0$ is compact then $G=G^0$. Moreover, if $G^0$ is compact, then $G$ has the decomposition (\ref{decomposition}) above.
\end{itemize}
\end{proposition}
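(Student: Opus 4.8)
The plan is to push everything down to the Lie algebra, where the eigenspace decomposition of the derivation $\DC$ carries the argument, and then lift to $G$ using connectedness, the dynamics of $(\varphi_t)$ and Proposition \ref{contracting}. The basic fact is the bracket relation $[\fg_\alpha,\fg_\beta]\subseteq\fg_{\alpha+\beta}$ for generalized eigenspaces of a derivation (with $\fg_{\alpha+\beta}=\{0\}$ when $\alpha+\beta$ is not an eigenvalue). This immediately gives that $\fg^+,\fg^0,\fg^-,\fg^{+,0},\fg^{-,0}$ are subalgebras and that $\fg^+$ is an ideal of $\fg^{+,0}$ and $\fg^-$ an ideal of $\fg^{-,0}$, because $[\fg^0,\fg^{\pm}]\subseteq\fg^{\pm}$. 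Item 1 is then formal: the ideal property makes $G^+$ normal in the connected group $G^{+,0}$, so $G^+G^0$ and $G^0G^+$ are connected subgroups with Lie algebra $\fg^++\fg^0=\fg^{+,0}$ and hence both equal $G^{+,0}$; the minus case is identical.

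For items 2 and 3 I would use the dynamical reading of the subgroups. Restricted to $G^-$ the flow $\varphi_t$ is contracting and restricted to $G^+$ it is expanding, so a nontrivial orbit in $G^-$ tends to $e$ as $t\to+\infty$, a nontrivial orbit in $G^+$ tends to $e$ as $t\to-\infty$, and each is driven away from $e$ in the opposite time direction. An element of $G^+\cap G^-$ would have to do both at once, so $G^+\cap G^-=\{e\}$. For $G^{+,0}\cap G^-$ I would write a member as $g=g^+g^0$ via item 1 and note that its components lie in eigenspaces with $\mathrm{Re}(\alpha)\geq 0$, which cannot exhibit the exponential contraction to $e$ forced by membership in $G^-$, so $g=e$; the symmetric argument disposes of $G^{-,0}\cap G^+$, and item 3 follows by intersecting with the already trivial intersections since $\fg^{+,0}\cap\fg^{-,0}=\fg^0$. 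Proposition \ref{contracting}, which makes $G^+$ and $G^-$ simply connected, is what allows these orbit statements to be controlled globally rather than only near $e$.

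The substantial items are 4 and 5. For closedness I would identify $G^-$ with the global stable set $\{g:\varphi_t(g)\to e$ as $t\to+\infty\}$ and $G^+$ with the unstable set, and deduce closedness from the product structure together with the exponential-rate estimates, the remaining subgroups being handled through the same decomposition. For item 5 the multiplication map $m:G^{+,0}\times G^-\to G$ is injective by item 2 and is a local diffeomorphism at $(e,e)$, where its differential is the direct-sum isomorphism $\fg^{+,0}\oplus\fg^-=\fg$; the real content is surjectivity, and this is exactly where solvability enters, through the structure of simply connected solvable groups (global exponential-type coordinates, or an induction along the derived series) that forces the infinitesimal splitting to integrate to a global product. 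That the singularities of $\XC$ lie in $G^0$ is then immediate, since a fixed point of the flow can carry neither a nontrivial contracting nor a nontrivial expanding component. I expect the surjectivity of $m$ to be the main obstacle: the algebraic and local parts are routine, but global exhaustion of $G$ genuinely needs the solvable structure.

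For item 6 I would use that an inner derivation $\DC=\ad Y$ satisfies $\DC Y=[Y,Y]=0$, so $Y\in\fg^0$ and $\exp(tY)\subseteq G^0$; when $G^0$ is compact the closure of this one-parameter subgroup is a torus, so $\Ad(\exp tY)=\rme^{t\,\ad Y}$ lies in a compact group of automorphisms and $\ad Y=\DC$ must have purely imaginary spectrum. Hence every eigenvalue of $\DC$ has zero real part, $\fg=\fg^0$ and $G=G^0$. For the remaining clause I would drop the innerness hypothesis and use compactness of $G^0$ to control the central directions, reducing the decomposition (\ref{decomposition}) to the mechanism of item 5.
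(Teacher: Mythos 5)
First, note that the paper does not prove this proposition at all: it is imported from \cite{dasilva} and \cite{DaAy2} (``its proof can be found in...''), so there is no in-paper argument to compare yours against. Judged on its own terms, your outline gets the cheap parts right -- the bracket relation $[\fg_\alpha,\fg_\beta]\subseteq\fg_{\alpha+\beta}$, the normality of $G^{\pm}$ in $G^{\pm,0}$ giving item 1, the contraction/expansion estimates giving $G^+\cap G^-=\{e\}$, and the first clause of item 6 (the observation that $\DC Y=[Y,Y]=0$ puts $Y\in\fg^0$, so $\rme^{t\,\ad Y}=\Ad(\exp tY)$ lies in the compact group $\Ad(G^0)$ and must have purely imaginary semisimple spectrum, is clean and correct). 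But the items that carry the real content are only gestured at, and in places the gestures would not survive being written out.

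Concretely: (a) For $G^{+,0}\cap G^-=\{e\}$ and item 3 you argue that the ``components lie in eigenspaces with $\mathrm{Re}(\alpha)\geq 0$''; group elements do not lie in eigenspaces, and passing from the linear statement ($\rme^{t\DC}v\not\to 0$ for $0\neq v\in\fg^{+,0}$) to the group statement needs either a global exponential chart or an a priori product decomposition, neither of which is available yet. Likewise $\mathrm{Lie}(A\cap B)=\mathrm{Lie}(A)\cap\mathrm{Lie}(B)$ only controls the identity component of the intersection, so item 3 needs an extra argument ruling out other components. (b) Your item 4 is circular: you propose to deduce closedness of $G^{\pm}$ from ``the product structure'', but the product structure is item 5 and holds only for solvable $G$ (or compact $G^0$), whereas closedness is asserted in general; and the identification of $G^-$ with the global stable set is Theorem \ref{metriccharacterization} of the paper, which is proved only in the hyperbolic case and itself invokes this proposition. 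A connected subgroup with nilpotent Lie algebra need not be closed (the irrational line on a torus), so something genuinely dynamical is required here and you have not supplied it. (c) For item 5 you correctly identify surjectivity of $(h,g)\mapsto hg$ as the main obstacle and then stop; naming ``induction along the derived series'' is a plan, not a proof, and the non--simply connected solvable case needs separate care. (d) The second clause of item 6 is likewise left as an intention. In short, the proposal is a plausible roadmap, but items 4, 5 and the tail of 6 -- which are exactly why the paper outsources this proposition to \cite{dasilva} and \cite{DaAy2} -- remain unproved, and the arguments offered for the remaining intersections in items 2 and 3 conflate the Lie algebra picture with the group picture.
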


The subgroups $G^+$, $G^-$ are called, respectively, the {\bf unstable} and {\bf stable subgroups }of the linear flow $\varphi_t$. We denote the restriction of $\varphi_t$ to $G^+$ and $G^-$, respectively, by $\varphi_t^+$ and $\varphi_t^-$. The next proposition gives us another topological property of such subgroups.

\begin{proposition}
\label{simplyconnected}
The Lie subgroups $G^+$ and $G^-$ are simply connected.
\end{proposition}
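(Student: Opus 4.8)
The plan is to reduce the statement to Proposition~\ref{contracting}: I will show that the restricted flow $\varphi_t^-$ on $G^-$ is a contracting flow of automorphisms, and that $\varphi_t^+$ on $G^+$ is an expanding flow of automorphisms. Since $G^-$ and $G^+$ are connected Lie groups, Proposition~\ref{contracting} then gives directly that both are simply connected.

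For the stable subgroup, recall that $G^-$ is connected and $\varphi$-invariant, so, each $\varphi_t$ being an automorphism of $G$, the restriction $\varphi_t^- = \varphi_t|_{G^-}$ is a flow of automorphisms of $G^-$. Its differential at the identity is the restriction of $(d\varphi_t)_e = \rme^{t\DC}$ to $\fg^-$, which equals $\rme^{t\DC|_{\fg^-}}$ because $\fg^-$ is $\DC$-invariant. By the definition of $\fg^-$, every eigenvalue of $\DC|_{\fg^-}$ has negative real part, so the standard spectral estimate for matrix exponentials (via the Jordan form of $\DC|_{\fg^-}$) provides constants $c,\mu>0$ with
$$\left\| (d\varphi_t^-)_e\, X \right\| \le c\,\rme^{-\mu t}\,\|X\| \qquad \text{for all } X\in\fg^-,\ t\ge 0,$$
so that $(d\varphi_t^-)_e \to 0$ as $t\to\infty$. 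This is exactly the contracting condition, hence $\varphi_t^-$ is contracting and Proposition~\ref{contracting} yields that $G^-$ is simply connected.

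The unstable case is symmetric. The restriction $\varphi_t^+$ is a flow of automorphisms of the connected group $G^+$, and since every eigenvalue of $\DC|_{\fg^+}$ has positive real part, the same estimate applied to $-\DC|_{\fg^+}$ shows that the inverse flow $(\varphi_t^+)^{-1} = \varphi_{-t}^+$ is contracting; equivalently, $\varphi_t^+$ is expanding. Proposition~\ref{contracting} again applies, giving that $G^+$ is simply connected.

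I do not expect a serious obstacle here: the decomposition, connectedness and $\varphi$-invariance of $G^{\pm}$ are already recorded before the statement, and the whole argument rests on Proposition~\ref{contracting}. The only point that genuinely needs care is verifying that the restricted flows satisfy the exponential bound in the definition of contracting/expanding, i.e.\ the spectral estimate for $\rme^{t\DC}$ on a subspace on which $\DC$ has eigenvalues with real parts of a single sign.
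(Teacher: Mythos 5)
Your proposal is correct and follows exactly the paper's argument: the restriction of $(d\varphi_t)_e=\rme^{t\DC}$ to $\fg^-$ (resp.\ $\fg^+$) has only eigenvalues with negative (resp.\ positive) real part, so $\varphi_t^-$ is contracting and $\varphi_t^+$ is expanding, and Proposition~\ref{contracting} applies. You merely spell out the spectral estimate for the matrix exponential that the paper leaves implicit.
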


\begin{proof}
Since $(d\varphi_t)_e=\mathrm{e}^{\DC}$ restricted to $\fg^+$ and to $\fg^-$ has only eigenvalues with positive and negative real parts, respectively, we have that $\left(\varphi_t^+\right)_{t\in\R}$ is an expanding flow of automorphisms and $\left(\varphi^-_t\right)_{t\in\R}$ a contracting flow of automorphisms. Proposition \ref{contracting} implies that $G^+$ and $G^-$ are simply connected.
\end{proof}

Next we will prove a technical lemma that will be needed in the next sections.

\begin{lemma}
\label{convergence}
Let us assume that $G$ is a connected Lie group and let $H_1, H_2$ closed subgroups of $G$ such that $G=H_1H_2$ and $H_1\cap H_2=\{e\}$. For a given sequence $(x_n)$ in $G$ consider the unique sequences $(h_{1, n})$ in $H_1$ and $(h_{2, n})$ in $H_2$ such that $x_n=h_{1, n}h_{2, n}$. Then, $x_n\rightarrow x$ if and only if $h_{i, n}\rightarrow h_i$ for $i=1, 2$ where $x=h_1h_2$.
\end{lemma}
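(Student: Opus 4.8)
The plan is to recast the claim as the statement that the multiplication map $\mu\colon H_1\times H_2\to G$, $\mu(h_1,h_2)=h_1h_2$, is a homeomorphism. Because $H_1$ and $H_2$ are closed, they are embedded Lie subgroups, so $H_1\times H_2$ is a Lie group and $\mu$ is smooth. The hypothesis $H_1\cap H_2=\{e\}$ makes $\mu$ injective --- if $h_1h_2=h_1'h_2'$ then $(h_1')^{-1}h_1=h_2'h_2^{-1}\in H_1\cap H_2=\{e\}$, so $h_1=h_1'$ and $h_2=h_2'$, which is also the uniqueness of the factorization used in the statement --- while $G=H_1H_2$ makes $\mu$ surjective. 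The ``if'' direction is then immediate from the continuity of the product: $h_{1,n}\to h_1$ and $h_{2,n}\to h_2$ give $x_n=h_{1,n}h_{2,n}\to h_1h_2=x$. The real content is the ``only if'' direction, which amounts exactly to the continuity of $\mu^{-1}$.

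First I would show that $\mu$ is a local diffeomorphism. Let $\fh_1,\fh_2$ be the Lie algebras of $H_1,H_2$. If $X\in\fh_1\cap\fh_2$, then $\exp(tX)\in H_1\cap H_2=\{e\}$ for every $t\in\R$, so $X=0$; hence $\fh_1\cap\fh_2=\{0\}$ and the differential $(d\mu)_{(e,e)}(X,Y)=X+Y$ is injective, i.e. $\mu$ is an immersion at $(e,e)$. Using the intertwining relation $\mu\circ(L_{h_1}\times R_{h_2})=L_{h_1}\circ R_{h_2}\circ\mu$ (with $L,R$ the left and right translations) together with the fact that translations are diffeomorphisms, $(d\mu)_{(h_1,h_2)}$ is conjugate to $(d\mu)_{(e,e)}$ and is therefore injective as well; thus $\mu$ is an immersion at every point. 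A surjective immersion cannot have domain of dimension strictly smaller than $\dim G$, for otherwise its image would have measure zero; combined with $\dim(H_1\times H_2)=\dim\fh_1+\dim\fh_2=\dim(\fh_1\oplus\fh_2)\le\dim\fg$, this forces $\dim(H_1\times H_2)=\dim G$. An immersion between manifolds of equal dimension has bijective differential, so $\mu$ is a local diffeomorphism.

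Finally, a bijective local diffeomorphism is a global diffeomorphism --- being an open continuous bijection it is a homeomorphism, with smooth inverse --- so $\mu^{-1}$ is continuous. Consequently $x_n\to x$ yields $(h_{1,n},h_{2,n})=\mu^{-1}(x_n)\to\mu^{-1}(x)=(h_1,h_2)$, that is $h_{i,n}\to h_i$ for $i=1,2$, as required. The forward direction is free; the only genuine obstacle is the continuity of $\mu^{-1}$, and within that the decisive point is the dimension count forcing $\fg=\fh_1\oplus\fh_2$ (equivalently, that the transversality $\fh_1\cap\fh_2=\{0\}$ already accounts for the full dimension of $G$), which is precisely what upgrades the continuous bijection $\mu$ to a homeomorphism.
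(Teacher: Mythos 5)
Your proof is correct, but it takes a genuinely different route from the paper's. The paper also disposes of the ``if'' direction by continuity of multiplication, but for the converse it first translates ($\tilde h_{1,n}=h_1^{-1}h_{1,n}$, $\tilde h_{2,n}=h_{2,n}h_2^{-1}$) to reduce to $x_n\rightarrow e$, and then invokes a cited result (Lemma 6.14 of \cite{SM1}-type material in San Martin's book) asserting that one can shrink given identity neighborhoods $U_i\subset H_i$ to neighborhoods $V_i$ with $V_1V_2$ open in $G$; uniqueness of the factorization then forces $h_{i,n}\in V_i$ for large $n$. So the paper's argument is purely local at the identity and outsources the decisive fact --- openness of the product map --- to a reference. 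You instead prove that fact from scratch and globally: the injectivity and surjectivity of $\mu$, the computation $\fh_1\cap\fh_2=\{0\}$ via one-parameter subgroups, the homogeneity argument spreading the immersion property from $(e,e)$ to all of $H_1\times H_2$, and the measure-zero dimension count forcing $\dim\fh_1+\dim\fh_2=\dim\fg$ are all sound (the measure-zero step uses that a smooth map from a second-countable manifold of strictly smaller dimension cannot be surjective, which applies since closed subgroups of $G$ are embedded and second countable). What your approach buys is a self-contained proof and a strictly stronger conclusion --- $\mu$ is a global diffeomorphism, so the factor maps $x\mapsto h_1(x),h_2(x)$ are smooth, not merely sequentially continuous; what the paper's approach buys is brevity, at the price of leaning on an external lemma. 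One small remark: your dimension count also shows $\fg=\fh_1\oplus\fh_2$ as vector spaces, a fact the paper never needs to make explicit because its cited lemma already packages the relevant transversality.
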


\begin{proof}
Certainly if $h_{i, n}\rightarrow h_i$ in $G$, $i=1, 2$ we have by the continuity of the product of the Lie group $G$ that $x_n=h_{1, n}h_{2, n}\rightarrow h_1h_2=x$ in $G$. Let us assume then that $x_n\rightarrow x$. By considering $\tilde{h}_{1, n}=h_1^{-1}h_{1, n}$ and $\tilde{h}_{2, n}=h_{2, n}h_2^{-1}$ we can assume that $x_n\rightarrow e$ and we need to show that $h_{i, n}\rightarrow e$, $i=1, 2$. Let $U_i$ be a neighborhood of $e\in H_i$, $i=1, 2$. By the conditions on $H_i$, there are neighborhoods $V_i\subset U_i$ of $e\in H_i$, $i=1,2$ such that $W=V_1V_2$ is an open set of $G$ (see Lemma 6.14 of \cite{SM2}) and being that $x_n\rightarrow e$ there is $N\in \N$ such that for $n\geq N$ we have $x_n\in W$ which by the condition that $H_1\cap H_2=\{e\}$ implies $h_{i, 2}\in V_i\subset U_i$, $i=1, 2$ showing that $h_{i, n}\rightarrow e$ for $i=1, 2$ as desired.
\end{proof}

Next we characterize the stable and unstable elements  as attractors and repellers respectively. First we introduce the concept of hyperbolic linear vector field.

\begin{definition}
  Let $\mathcal{X}$ be a linear vector field. We say that $\XC$ is {\bf hyperbolic} if its associated derivation $\DC$ is hyperbolic, that is, $\DC$ has no eigenvalues with zero real part.
\end{definition}

\begin{remark}
  If $\fg_{\alpha}$ is the generalized eigenspace associated with the eigenvalue $\alpha$ of $\DC$, it is well known that 
	$$
	  [\fg_{\alpha}, \fg_{\beta}]\subset\fg_{\alpha+\beta}
	$$ 
	when $\alpha+\beta$ is an eigenvalue of $\DC$ and zero otherwise (see for instance Proposition 3.1 of \cite{SM1}). This implies, in particular, that a necessary condition for the existence of a hyperbolic linear vector field on a Lie group $G$ with $\dim G<\infty$ is that $G$ is a nilpotent Lie group. 
\end{remark}

To define the attractors and repellers we need on Lie Group $G$ a metric space structure. Let $\varrho$ stands for a left invariant Riemmanian distance on $G$. Using that $\varphi_t\circ L_g=L_{\varphi_t(g)}\circ\varphi_t$ we get
\[
  \varrho(\varphi_t(g), \varphi_t(h))\leq ||(d\varphi_t)_e||\varrho(g, h), \;\;\;\mbox{ for any }\;\;g, h\in G.
\]
In particular, since $(d\varphi^-_t)_e=\rme^{t\DC|_{\fg^-}}$ has only eigenvalues with negative real part, there are constants $c, \mu>0$ such that
\begin{equation}\label{metriccontract}
  \varrho(\varphi^-_t(g), \varphi^-_t(h))\leq c^{-1}\rme^{-\mu t}\varrho(g, h), \;\;\;\mbox{ for any }\;\;g, h\in G^-, t\geq 0.
\end{equation}
Analogously, we have that
\begin{equation}\label{metricexpand}
  \varrho(\varphi^+_t(g), \varphi^+_t(h))\geq c\rme^{\mu t}\varrho(g, h), \;\;\;\mbox{ for any }\;\;g, h\in G^+, t\geq 0.
\end{equation}

Now we have the main result of this section.
\begin{theorem}\label{metriccharacterization}
  If $\XC$ is a hyperbolic linear vector field on $G$ then
  \begin{enumerate}
    \item It holds that $g \in G^{-}$ if and only if $\lim_{t \to \infty} \varphi_t(g) =e$.
    \item It holds that $g \in G^{+}$ if and only if $\lim_{t \to - \infty} \varphi_t(g) = e$.
  \end{enumerate}
\end{theorem}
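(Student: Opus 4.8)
The plan is to reduce everything to the two metric estimates (\ref{metriccontract}) and (\ref{metricexpand}) together with the product decomposition of $G$ and the decoupling Lemma \ref{convergence}. Since $\XC$ is hyperbolic we have $\fg^0=0$, hence $G^0=\{e\}$; by the Remark following the definition of hyperbolicity the mere existence of such an $\XC$ forces $G$ to be nilpotent, in particular solvable, so Proposition \ref{subgroups}(5) applies. Because $G^{+, 0}=G^+$ and $G^{-, 0}=G^-$ when $\fg^0=0$, the decomposition (\ref{decomposition}) becomes $G=G^+G^-=G^-G^+$, and by Proposition \ref{subgroups}(2) we have $G^+\cap G^-=\{e\}$, with both factors closed by Proposition \ref{subgroups}(4). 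This is exactly the setup required to invoke Lemma \ref{convergence} with $H_1=G^+$ and $H_2=G^-$.

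I would prove the two directions of statement 1 separately. For the easy direction (assuming $g\in G^-$), I note that $\varphi_t$ is an automorphism and so fixes $e$; taking $h=e$ in the contraction estimate (\ref{metriccontract}) gives $\varrho(\varphi_t(g), e)\leq c^{-1}\rme^{-\mu t}\varrho(g, e)\to 0$ as $t\to\infty$, so $\varphi_t(g)\to e$. For the converse, suppose $\varphi_t(g)\to e$ and write $g=g^+g^-$ with $g^+\in G^+$ and $g^-\in G^-$ via $G=G^+G^-$. Since each $\varphi_t$ is an automorphism preserving $G^+$ and $G^-$, we have $\varphi_t(g)=\varphi_t^+(g^+)\,\varphi_t^-(g^-)$, which is precisely the unique $H_1H_2$-factorization of $\varphi_t(g)$. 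Applying Lemma \ref{convergence} along any sequence $t_n\to\infty$ then yields $\varphi_t^+(g^+)\to e$.

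The crux is to show that $\varphi_t^+(g^+)\to e$ forces $g^+=e$. Here I would use the expansion estimate (\ref{metricexpand}) with $h=e$: since $\varphi_t^+(e)=e$, it gives $\varrho(\varphi_t^+(g^+), e)\geq c\,\rme^{\mu t}\varrho(g^+, e)$ for all $t\geq 0$. If $g^+\neq e$ then $\varrho(g^+, e)>0$ and the right-hand side diverges, contradicting $\varphi_t^+(g^+)\to e$; hence $g^+=e$ and $g=g^-\in G^-$, completing statement 1.

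Finally, statement 2 I would obtain by symmetry rather than by repeating the argument: applying statement 1 to the linear vector field $-\XC$, whose flow is $(\varphi_{-t})_{t\in\R}$ and whose associated derivation is $-\DC$, interchanges the roles of $G^+$ and $G^-$ (the stable subgroup of $-\XC$ is $G^+$). Thus $g\in G^+$ if and only if $\lim_{s\to\infty}\varphi_{-s}(g)=e$, which is $\lim_{t\to-\infty}\varphi_t(g)=e$. I expect the only real obstacle to be the converse of statement 1, namely the passage from convergence of the whole orbit to convergence of the unstable component; this is handled cleanly by Lemma \ref{convergence}, after which the expansion estimate rules out a nontrivial unstable part.
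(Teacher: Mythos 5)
Your proposal is correct and follows essentially the same route as the paper: the contraction estimate (\ref{metriccontract}) for the easy direction, and for the converse the hyperbolicity-forces-nilpotency observation, the decomposition $G=G^+G^-$ from Proposition \ref{subgroups}, and the expansion estimate (\ref{metricexpand}) to force $g^+=e$. The only (harmless) difference is that you extract $\varphi_t^+(g^+)\to e$ from $\varphi_t(g)\to e$ via Lemma \ref{convergence}, whereas the paper does it directly with left invariance and the triangle inequality, $\varrho(\varphi_t(g^+),e)=\varrho(\varphi_t(g),\varphi_t(g^-))\leq\varrho(\varphi_t(g),e)+\varrho(e,\varphi_t(g^-))$.
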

\begin{proof}
  We will prove only the first assertion, since the proof of second assertion is analogous. Let $g \in G^{-}$. It is clear that $\varphi_t(g) = \varphi_t^-(g)$. From equation (\ref{metriccontract}) it follows that
  \begin{eqnarray*}
    \varrho(\varphi^-_t(g), e) = \varrho(\varphi^-_t(g), \varphi^-_t(e))\leq c^{-1}\rme^{-\mu t}\varrho(g, e).
  \end{eqnarray*}
  Taking $t \to \infty$ we get $ \varrho(\varphi^-_t(g), e) \to 0$, that is, $\lim_{t \to \infty} \varphi_t(g) =e$.

  Conversely, let $g\in G$ and assume that $\lim_{t \to \infty} \varphi_t(g) = e$. Being that $\XC$ is hyperbolic, $G$ is nilpotent and consequently, by Proposition \ref{subgroups}, we have that $g$ can be written uniquely as $g=g^-g^+$ with $g^{\pm}\in G^{\pm}$. By the left invariance of the metric we have that
  $$\varrho(\varphi_t(g^+), e)=\varrho(\varphi_t(g^-)\varphi_t(g^+), \varphi_t(g^-))=\varrho(\varphi_t(g), \varphi_t(g^-))$$
  $$\leq \varrho(\varphi_t(g), e)+\varrho(e, \varphi_t(g^-)).$$
 Since $g^-\in G^-$ we have by the first part of the proof and by our assumption that
 $$\lim_{t \to \infty} \varphi_t(g^-)=\lim_{t \to \infty} \varphi_t(g) =e$$
 implying that $\lim_{t \to \infty} \varphi_t(g^+) =e$. This together with the inequality (\ref{metricexpand}) implies that $ g^+=e$ and consequently that $g=g^-\in G^-$ as desired.
\end{proof}

\begin{remark}
  It is well know that the solution to the linear system (\ref{linearsystem}) in $\mathbb{R}^n$ with initial condition $x_0$ is $e^{At}x_0$. Furthermore, if we consider the Euclidian metric, then a version of Theorem \ref{metriccharacterization} is easily obtained. In fact, the definition of hyperbolic system in $\mathbb{R}^n$ gives this result trivially (see for example \cite{robinson}).
\end{remark}

\section{Conjugation between linear flows}

In this section we classify the linear vector fields based on topological conjugacies between their associated flows. From now on we will consider $\XC$ and $\YC$ to be linear vector fields on connected Lie groups $G$ and $H$, respectively, and denote their linear flows by $(\varphi_t)_{t\in\R}$ and $(\psi_t)_{t\in\R}$ and their associated derivation by $\DC$ and $\FC$, respectively. We say that $\XC$ and $\YC$ are topological conjugated if there exists a homeomorphism $\pi: G\rightarrow H$ that commutates $\varphi_t$ and $\psi_t$, that is, 
\[
  \pi(\varphi_t(g))=\psi_t(\pi(g)), \;\;\;\mbox{ for any }\;\;g\in G.
\]
The next result establishes a first conjugation property of these restrictions.

\begin{lemma}\label{lemadimensao}
  It holds that $\dim \fg^+ = \dim \fh^{+}$ if and only if there exists a homeomorphism $\xi:\fg^+\rightarrow\fh^+$ such that
  \[
    \xi(\rme^{t\DC^+}X)=\rme^{t\FC^+}\xi(X), \;\;\;\mbox{ for any }\;\;X\in\fg^+,
  \]
  where $\DC^+$ and $\FC^+$ are the restrictions of $\DC$ and $\FC$ to $\fg^+$ and $\fh^+$, respectively. Analogously, the same is true if $\dim \fg^{-} = \dim \fh^{-}$.
\end{lemma}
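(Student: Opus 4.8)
The plan is to prove the nontrivial implication — that equality of dimensions produces the conjugating homeomorphism — by realizing both flows as suspensions of their action on a unit sphere and transferring a sphere homeomorphism across. The converse is immediate: a homeomorphism $\xi:\fg^+\to\fh^+$ identifies two finite-dimensional real vector spaces as topological spaces, so invariance of dimension (Brouwer) forces $\dim\fg^+=\dim\fh^+$. The assertion for $\fg^-$ and $\fh^-$ then follows by applying the expanding case to the inverse flows, exactly as contracting is passed to expanding throughout the paper.

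For the forward direction I would first replace the Euclidean norm on $\fg^+$ by an adapted one. Since every eigenvalue of $\DC^+$ has strictly positive real part, the Lyapunov equation provides an inner product $\langle\cdot,\cdot\rangle_{\fg}$ for which $\DC^++(\DC^+)^{*}$ is positive definite; writing $\|\cdot\|_{\fg}$ for the associated norm, one computes
\[
\frac{\rmd}{\rmd t}\,\|\rme^{t\DC^+}X\|_{\fg}^{2}=\big\langle(\DC^++(\DC^+)^{*})\rme^{t\DC^+}X,\rme^{t\DC^+}X\big\rangle_{\fg}>0
\]
for every $X\neq0$. Thus along each nonzero orbit the norm is strictly increasing, and since $\rme^{t\DC^+}X\to0$ as $t\to-\infty$ and $\|\rme^{t\DC^+}X\|_{\fg}\to\infty$ as $t\to+\infty$, the orbit meets the unit sphere $S_{\fg}=\{X:\|X\|_{\fg}=1\}$ at exactly one instant. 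This produces a crossing-time function $\tau:\fg^+\setminus\{0\}\to\R$, continuous by the strict monotonicity together with the implicit function theorem, and a base-point map $\beta(X)=\rme^{-\tau(X)\DC^+}X\in S_{\fg}$, satisfying the translation identity $\tau(\rme^{s\DC^+}X)=s+\tau(X)$ and hence $\beta(\rme^{s\DC^+}X)=\beta(X)$. I would carry out the identical construction on $\fh^+$, obtaining $\|\cdot\|_{\fh}$, the sphere $S_{\fh}$, and its time function.

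With these global cross-sections in hand I would fix any homeomorphism $h:S_{\fg}\to S_{\fh}$, which exists precisely because $\dim\fg^+=\dim\fh^+$ makes the two spheres homeomorphic, and define $\xi(0)=0$ together with
\[
\xi(X)=\rme^{\tau(X)\FC^+}h(\beta(X)),\qquad X\neq0.
\]
The conjugation identity is then formal: applying the translation identity and $\beta$-invariance gives $\xi(\rme^{s\DC^+}X)=\rme^{s\FC^+}\xi(X)$. On $\fg^+\setminus\{0\}$ the map $\xi$ is a homeomorphism, being the composite of $X\mapsto(\tau(X),\beta(X))$ onto $\R\times S_{\fg}$, the map $h$ on the second factor, and the suspension $(s,Z)\mapsto\rme^{s\FC^+}Z$ onto $\fh^+\setminus\{0\}$. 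The main obstacle — and the only delicate point — is continuity of $\xi$ and of $\xi^{-1}$ at the origin: as $X\to0$ one has $\tau(X)\to-\infty$, and since every eigenvalue of $\FC^+$ has positive real part, $\rme^{\tau(X)\FC^+}\to0$ uniformly on the compact sphere $S_{\fh}$, whence $\xi(X)\to0=\xi(0)$; the symmetric estimate for $\xi^{-1}$, built from $h^{-1}$ and $\DC^+$, handles the inverse. This yields the desired conjugating homeomorphism.
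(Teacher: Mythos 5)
Your argument is correct, and it takes a more self-contained route than the paper. For the forward direction the paper simply transports $\DC^+$ and $\FC^+$ to $\R^n$ by linear isomorphisms $S$ and $T$, observes that the eigenvalues (hence the positivity of their real parts) are preserved, and then invokes the classical classification of hyperbolic linear flows (Theorem 7.1 of \cite{Robi99}) as a black box to get the conjugacy $\zeta$ on $\R^n$, finally setting $\xi=T^{-1}\zeta S$. You instead reprove that classical fact directly: the Lyapunov-adapted inner product making $t\mapsto\|\rme^{t\DC^+}X\|$ strictly increasing, the resulting global cross-section by the unit sphere with its crossing-time function $\tau$ and base-point map $\beta$, the transfer of an arbitrary sphere homeomorphism $h$ (available exactly when the dimensions agree) via $\xi(X)=\rme^{\tau(X)\FC^+}h(\beta(X))$, and the separate verification of continuity of $\xi$ and $\xi^{-1}$ at the origin using $\tau(X)\rightarrow-\infty$. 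This is essentially the proof hiding inside the cited theorem, so what you gain is independence from the reference and an explicit formula for the conjugacy; what the paper gains is brevity. The converse is handled identically in both arguments, by invariance of domain, and your reduction of the stable case to the unstable case by time reversal matches the paper's remark that the two cases are analogous.
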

\begin{proof}
   We begin by writing $\dim\fg^+ = n$ and $\dim \fh^+=m$. Then there are two isomorphism $S: \fg^+ \rightarrow \R^n$ and $T: \fh^+ \rightarrow \R^m$. Thus we define   the linear maps $\widetilde{\mathcal{D}}: \R^n \rightarrow \R^n$ by $\widetilde{\DC} =  S \DC^+ S^{-1}$ and $\widetilde{\FC}: \R^m \rightarrow \R^m$ by $\widetilde{\FC} =  T \FC^+ T^{-1}$. We claim that eigenvalues of $\DC^+$ and $\widetilde{\DC}$ are the same. In fact, it is sufficient to view the characteristic polynomials
  \[
    \det(\widetilde{\DC} - \lambda I) = \det(S \DC^+ S^{-1} - \lambda SS^{-1}) = \det(S (\DC^+  -\lambda  I)S^{-1})= \det(\DC^+  -\lambda  I).
  \]
  It follows that all eigenvalues of $\widetilde{\DC}$ have positive real part. Analogously, the same assertion is true for the eigenvalues of $\widetilde{\FC}$ and $\FC$.

  Suppose now that $n=m$, then Theorem 7.1 in \cite{Robi99} assures that there exists a homeomorphism $\zeta: \R^n \rightarrow \R^n$ such that
  \[
    \zeta(\rme^{t\widetilde{\DC}}X)=\rme^{t\widetilde{\FC}}\zeta(X), \;\;\;\mbox{ for any }\;\;X \in \R^n.
  \]
  Defining $\xi:\fg^+\rightarrow\fh^+$ by $\xi  = T^{-1} \zeta \,S$ we see that
  \[
    \xi^{-1} \rme^{t\FC^+}\xi(X) =   S^{-1} \zeta^{-1}  \rme^{t(T\FC^+T^{-1})} \zeta S (X)
    =   S^{-1} \zeta^{-1}  \rme^{t\widetilde{\FC}} \zeta S (X)
  \]
  \[
     =  S^{-1} \rme^{t\widetilde{\DC}}  S (X)
     =   \rme^{t(S^{-1}\widetilde{\DC}S)}   (X)
     =   \rme^{t \DC^+}(X),
  \]
  which shows the topological conjugacy.

  Conversely, suppose that there exists a homeomorphism $\xi:\fg^+\rightarrow\fh^+$ such that
  \[
    \xi(\rme^{t\DC^+}X)=\rme^{t\FC^+}\xi(X), \;\;\;\mbox{ for any }\;\;X\in\fg^+.
  \]
  The map $\zeta: \R^n \rightarrow \R^m$ given by $\zeta(v) = T \xi \,S^{-1}(v)$ is certainly a homeomorphism which by the Invariance of Domain Theorem implies that $\dim\fg^+ = n = m =\dim \fh^+$ concluding the proof.
\end{proof}

\begin{theorem}\label{dimensionequality}
   The unstable subgroups of $\varphi_t$ and $\psi_t$ have the same dimension if and only if $\varphi^+_t$ and $\psi^+_t$ are conjugated. Analogously, the dimensions of their stable subgroups agree if and only if $\varphi^-_t$ and $\psi^-_t$ are conjugated.
\end{theorem}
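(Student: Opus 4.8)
The plan is to promote the Lie-algebra conjugacy furnished by Lemma \ref{lemadimensao} to a conjugacy at the group level, using the exponential map as a bridge. The decisive structural fact is that $G^+$ and $H^+$ are simply connected \emph{nilpotent} Lie groups: simple connectedness is exactly Proposition \ref{simplyconnected}, while nilpotency follows from the bracket relation $[\fg_\alpha,\fg_\beta]\subset\fg_{\alpha+\beta}$, since iterated brackets of eigenspaces with positive real part strictly increase the real part and the spectrum is finite, so the lower central series of $\fg^+$ terminates. Consequently $\exp_G\colon\fg^+\rightarrow G^+$ and $\exp_H\colon\fh^+\rightarrow H^+$ are global diffeomorphisms. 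I will also use the standard intertwining property of any Lie group automorphism, namely $\varphi_t^+\circ\exp_G=\exp_G\circ\,\rme^{t\DC^+}$ and $\psi_t^+\circ\exp_H=\exp_H\circ\,\rme^{t\FC^+}$, which holds because $\varphi_t^+,\psi_t^+$ are automorphisms with $(d\varphi_t^+)_e=\rme^{t\DC^+}$ and $(d\psi_t^+)_e=\rme^{t\FC^+}$.

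For the forward implication, assume $\dim\fg^+=\dim\fh^+$. Lemma \ref{lemadimensao} then provides a homeomorphism $\xi\colon\fg^+\rightarrow\fh^+$ with $\xi(\rme^{t\DC^+}X)=\rme^{t\FC^+}\xi(X)$ for all $X\in\fg^+$. I define
\[
  \pi=\exp_H\circ\,\xi\circ\exp_G^{-1}\colon G^+\rightarrow H^+,
\]
which is a homeomorphism as a composition of homeomorphisms. To check the conjugacy, write any $g\in G^+$ as $g=\exp_G(X)$; the intertwining relation gives $\varphi_t^+(g)=\exp_G(\rme^{t\DC^+}X)$, so that
\[
  \pi(\varphi_t^+(g))=\exp_H(\xi(\rme^{t\DC^+}X))=\exp_H(\rme^{t\FC^+}\xi(X))=\psi_t^+(\exp_H(\xi(X)))=\psi_t^+(\pi(g)),
\]
where the middle equality is the conjugacy of $\xi$ and the last two use the intertwining relation for $\psi_t^+$ together with the definition of $\pi$. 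Hence $\varphi_t^+$ and $\psi_t^+$ are conjugated.

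For the converse, suppose $\varphi_t^+$ and $\psi_t^+$ are conjugated by a homeomorphism $\pi\colon G^+\rightarrow H^+$. Since $G^+$ and $H^+$ are manifolds of dimensions $\dim\fg^+$ and $\dim\fh^+$ respectively, the mere existence of a homeomorphism between them forces $\dim\fg^+=\dim\fh^+$ by invariance of domain (equivalently, topological invariance of dimension); note that the flow structure is not even needed here. The statement for the stable subgroups is identical, replacing $\fg^+,\fh^+,\DC^+,\FC^+$ by $\fg^-,\fh^-,\DC^-,\FC^-$ and using that $G^-,H^-$ are likewise simply connected nilpotent.

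The only delicate point is the transfer through $\exp$: it works precisely because $\exp_G,\exp_H$ are \emph{global} diffeomorphisms, which in turn rests on both the simple connectedness from Proposition \ref{simplyconnected} and the nilpotency coming from the weight decomposition. If either hypothesis failed, $\exp$ would not be invertible and the algebra-level conjugacy $\xi$ could not be pushed to $G^+$ in this clean way; so I expect the verification of these two structural facts, rather than the final computation, to carry the real content of the argument.
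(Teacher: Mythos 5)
Your proposal is correct and follows essentially the same route as the paper: Lemma \ref{lemadimensao} combined with the fact that $\exp$ is a global diffeomorphism on the simply connected nilpotent groups $G^{\pm}$, $H^{\pm}$, with $\pi=\exp_H\circ\,\xi\circ\exp_G^{-1}$ as the conjugacy. The only (harmless) difference is that in the converse you apply invariance of domain directly to the group-level homeomorphism, whereas the paper first transfers the conjugacy back to the Lie algebras via $\exp$ and then invokes Lemma \ref{lemadimensao}, whose converse direction is itself just invariance of domain.
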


\begin{proof}
  Let us do the unstable case, since the stable is analogous. By the above lemma, there exists $\xi:\fg^+\rightarrow\fh^+$ such that
  \[
    \xi(\rme^{t\DC^+}X)=\rme^{t\FC^+}\xi(X), \;\;\;\mbox{ for any }\;\;X\in\fg^+.
  \]
  By Proposition \ref{simplyconnected} the subgroups $G^+$ and $H^+$ are simply connected, which implies that the map
  \[
    \pi:G^+\rightarrow H^+, \;\;g\in G^+\mapsto \pi(g):=\exp_{H^+}(\xi(\exp_{G^+}^{-1}(g)))
  \]
  is well defined. Moreover, since both $\exp_{G^+}$ and $\exp_{H^+}$ are diffeomorphisms and $\xi$ is a homeomorphism, we have that $\pi$ is a homeomorphism. Let us show that $\pi$ conjugates $\varphi_t^+$ and $\psi_t^+$. Since $\varphi^+_t\circ\exp_{G^+}=\exp_{G^+}\circ\,\rme^{t\DC^+}$ and $\psi^+_t\circ\exp_{H^+}=\exp_{H^+}\circ\,\rme^{t\FC^+}$ we have, for any $g\in G^+$, that
  \[
    \pi(\varphi^+_t(g))=\exp_{H^+}(\xi(\exp_{G^+}^{-1}(\varphi^+_t(g))))=\exp_{H^+}(\xi(\rme^{t\DC^+}(\exp^{-1}_{G^+}(g))))
  \]
  \[
    =\exp_{H^+}(\rme^{t\FC^+}\xi(\exp_{G^+}^{-1}(g)))=\psi^+_t(\exp_{H^+}(\xi(\exp_{G^+}^{-1}(g))))=\psi^+_t(\pi(g))
  \]
  as desired.

  Conversely, suppose that there exists a homeomorphism $\pi:G^+\rightarrow H^+$ such that
  \[
    \pi( \varphi_t(g)) = \psi_t(\pi(g)), \;\;\;g\in G^+.
  \]
  Since $G^+$ and $H^+$ are connected, nilpotent and simply connected, it follows that the map $ \xi:\fg^{+} \rightarrow \fh^{+}$ given by
  \[
    \xi(X) = \exp_{H^+}^{-1}(\pi (\exp_{G^+}(X))), \;\;\;\mbox{ for any }\;\;X\in\fg^+
  \]
  is well defined and is in fact a homeomorphism between $\fg^+$ and $\fh^+$. Moreover, $\xi$ is a conjugacy between $\rme^{t\DC^+}$ and $\rme^{t\FC^+}$. In fact,
  \[
    \xi(\rme^{t\mathcal{D}^{+}}X)
     = \exp_{H^+}^{-1}(\pi (\exp_{G^+}(d\varphi_t^{+}X)))
     =  \exp_{H^+}^{-1}(\pi (\varphi_t^{+}(\exp_{G^+}(X))))
  \]
  \[
    = \exp_{H^+}^{-1}(\psi_t^{+}(\pi (\exp_{G^+}(X)))) = \rme^{t\mathcal{F}^{+}}( \exp_{H^+}^{-1}(\pi((\exp_{G^+}(X))))) = \rme^{t\mathcal{F}^{+}} \xi(X).
  \]
  Lemma \ref{lemadimensao} now assures that $\dim \fg^+ = \dim \fh^+$. It means that the unstable subgroup of $\varphi_t$ and $\psi_t$ have the same dimension.
\end{proof}

Now we have the main result of the paper.

\begin{theorem}\label{conjugatenecessary}
 Let us assume that $\XC$ and $\YC$ are hyperbolic. If the stable and unstable subgroups of $\varphi_t$ and $\psi_t$ have the same dimension, then $\varphi_t$ and $\psi_t$ are conjugated.
\end{theorem}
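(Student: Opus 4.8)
The plan is to construct the global conjugacy $\pi:G\to H$ by gluing the conjugacies already available on the stable and unstable pieces from Theorem \ref{dimensionequality}, exploiting that hyperbolicity collapses $G$ and $H$ onto the product of their stable and unstable subgroups.

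First I would extract the structural consequences of hyperbolicity. Since $\XC$ is hyperbolic, the Remark following the hyperbolicity definition forces $G$ to be nilpotent with $\fg^0=\{0\}$, so that $\fg=\fg^+\oplus\fg^-$ and $G^0=\{e\}$; hence $G^{+,0}=G^+$ and $G^{-,0}=G^-$. Applying Proposition \ref{subgroups} (items 2 and 5, which apply because nilpotent groups are solvable) yields the decomposition $G=G^-G^+$ with $G^-\cap G^+=\{e\}$, and the identical statement holds for $H$. Thus every $g\in G$ admits a unique expression $g=g^-g^+$ with $g^\pm\in G^\pm$, and similarly in $H$. Moreover, since the dimensions of the stable and unstable subgroups agree by hypothesis, Theorem \ref{dimensionequality} supplies homeomorphisms $\pi^+:G^+\to H^+$ and $\pi^-:G^-\to H^-$ conjugating $\varphi_t^\pm$ with $\psi_t^\pm$.

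Next I would define $\pi:G\to H$ by $\pi(g)=\pi^-(g^-)\,\pi^+(g^+)$, where $g=g^-g^+$ is the unique decomposition. To see that $\pi$ is a homeomorphism I would factor it through the coordinate maps: the assignment $g\mapsto(g^-,g^+)$ is a bijection $G\to G^-\tm G^+$ whose inverse $(g^-,g^+)\mapsto g^-g^+$ is continuous as a restriction of the group multiplication, while Lemma \ref{convergence} (applied with $H_1=G^-$ and $H_2=G^+$) gives exactly the continuity of the coordinate projections $g\mapsto g^\pm$, so $g\mapsto(g^-,g^+)$ is a homeomorphism. The same holds for $H$, and therefore $\pi$ is the composite of homeomorphisms $G\cong G^-\tm G^+\xrightarrow{\pi^-\tm\pi^+}H^-\tm H^+\cong H$.

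Finally I would check the intertwining relation. Because each $\varphi_t$ is an automorphism preserving $G^+$ and $G^-$, the decomposition of $\varphi_t(g)$ is $\varphi_t(g)=\varphi_t^-(g^-)\,\varphi_t^+(g^+)$; applying $\pi$ and the conjugacy property of $\pi^\pm$ gives
\[
  \pi(\varphi_t(g))=\pi^-(\varphi_t^-(g^-))\,\pi^+(\varphi_t^+(g^+))=\psi_t^-(\pi^-(g^-))\,\psi_t^+(\pi^+(g^+)),
\]
whereas $\psi_t(\pi(g))=\psi_t(\pi^-(g^-))\,\psi_t(\pi^+(g^+))$ equals the same product, since $\psi_t$ is an automorphism preserving $H^\pm$. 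The step I expect to be the crux is proving that the glued map is a genuine homeomorphism rather than merely a continuous bijection; this is precisely where the continuity of the stable/unstable coordinate decomposition, furnished by Lemma \ref{convergence}, is indispensable, and it explains why that technical lemma was established beforehand.
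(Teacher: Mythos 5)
Your proposal is correct and follows essentially the same route as the paper: both glue the stable and unstable conjugacies from Theorem \ref{dimensionequality} via the unique decomposition $G=G^+G^-$ (Proposition \ref{subgroups}, using nilpotency from hyperbolicity), use Lemma \ref{convergence} to get continuity of the coordinate projections and hence of the glued map, and verify the intertwining relation from the fact that the flows are automorphisms preserving the stable and unstable subgroups. The only cosmetic difference is the order of the factors in the decomposition, which is immaterial.
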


\begin{proof}
By the above theorem, there are homeomorphisms $\pi_u:G^+\rightarrow H^+$, that conjugates $\varphi^+_t$ and $\psi_t$, and $\pi_s:G^-\rightarrow H^-$ that conjugates $\varphi^-_t$ and $\psi^-_t$. By Proposition \ref{subgroups} we have, since $G$ is nilpotent, that $G=G^+G^-$ with $G^+\cap G^-=e_G$. Consequently, any $g\in G$ has a unique decomposition $g=g^+g^-$ with $g^+\in G^+$ and $g^-\in G^-$. Moreover, the same statement is true for $H$. Therefore, the map $\pi:G\rightarrow H$ given by
  \[
    g=g^+g^-\in G^+G^-\mapsto\pi(g):=\pi_u(g^+)\pi_s(g^-)\in H^+H^-=H
  \]
  is well defined and has inverse $\pi^{-1}(h^+h^-)=\pi_u^{-1}(h^+)\pi_s^{-1}(h^-)$. We will divide the rest of our proof in two steps:

  {\bf Step 1:} $\pi$ and $\pi^{-1}$ are continuous.
  Let us show the continuity of $\pi$ since the proof for $\pi^{-1}$ is analogous. Let then $(x_n)$ a sequence in $G$ and assume that $x_n\rightarrow x$. By Proposition \ref{subgroups} there are unique sequences $(g_n^+)$ in $G^+$ and $(g_n^-)$ in $G^-$ such that $x_n=g_n^+g_n^-$. If $x=g^+g^-$ we have by Lemma \ref{convergence} that $x_n\rightarrow x$ if and only if $g_n^{\pm}\rightarrow g^{\pm}$ in $G^{\pm}$. Since $\pi_u$ and $\pi_s$ are homeomorphism we have that $\pi_u(g_n^+)\rightarrow\pi_u(g^+)$ and $\pi_s(g_n^-)\rightarrow\pi_u(g^-)$ which again by Lemma \ref{convergence} now applied to $H$, implies that
  \[
    \pi(x_n)=\pi_u(g_n^+)\pi_s(g_n^-)\rightarrow \pi_u(g^+)\pi_u(g^-)=\pi(x)
  \]
  showing that $\pi$ is continuous.

  {\bf Step 2:} $\pi$ conjugates $\varphi_t$ and $\psi_t$;

  In fact
  \[
    \pi(\varphi_t(g))=\pi(\varphi_t(g^+)\varphi_t(g^-))=\pi_u(\varphi^+_t(g^+))\pi_s(\varphi_t^-(g^-))
  \]
  \[
   =\psi_t^+(\pi_u(g^+))\psi_t^-(\pi_s(g^-))=\psi_t(\pi_u(g^+))\psi_t(\pi_s(g^-))
  \]
  \[
  =\psi_t(\pi_u(g^+)\pi_s(g^-_1))=\psi_t(\pi(g)),
  \]
  for any $g\in G$, showing that $\pi$ conjugates $\varphi_t$ and $\psi_t$ and concluding the proof.
\end{proof}

\begin{corollary}
Let us assume that $\XC$ and $\YC$ are hyperbolic and that $G=H$. If the stable or the unstable subgroup of $\varphi_t$ and $\psi_t$ have the same dimension, then $\varphi_t$ and $\psi_t$ are conjugated.
\end{corollary}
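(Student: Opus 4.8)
The plan is to reduce the weaker, one-sided hypothesis of this corollary to the two-sided hypothesis of Theorem~\ref{conjugatenecessary} by a simple dimension count, exploiting the fact that $G$ and $H$ coincide. First I would record that, because $\XC$ and $\YC$ are hyperbolic, the derivations $\DC$ and $\FC$ have no eigenvalues with zero real part, so the central subalgebras vanish and one has the direct sum decompositions $\fg=\fg^+\oplus\fg^-$ for $\DC$ and $\fh=\fh^+\oplus\fh^-$ for $\FC$. Taking dimensions gives
\[
  \dim\fg^+ + \dim\fg^- = \dim G \qquad\text{and}\qquad \dim\fh^+ + \dim\fh^- = \dim H.
\]

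Next I would invoke the hypothesis $G=H$, which in particular forces $\dim G=\dim H$. Suppose, say, that the unstable subgroups agree in dimension, that is, $\dim\fg^+=\dim\fh^+$. Subtracting this from the two equal totals above yields
\[
  \dim\fg^- = \dim G - \dim\fg^+ = \dim H - \dim\fh^+ = \dim\fh^-,
\]
so the stable subgroups automatically share a dimension as well. The symmetric computation handles the remaining case, in which it is the stable subgroups that are assumed equal in dimension: then the unstable ones follow suit. Either way, \emph{both} the stable and the unstable subgroups of $\varphi_t$ and $\psi_t$ now have matching dimensions.

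Having upgraded the one-sided hypothesis to a two-sided one, I would finish by applying Theorem~\ref{conjugatenecessary} directly: with $\XC$ and $\YC$ hyperbolic and the stable and unstable dimensions known to agree, that theorem produces a homeomorphism conjugating $\varphi_t$ and $\psi_t$, as required. There is no genuine obstacle here; the whole content is the complementarity $\dim\fg^{\pm}=\dim G-\dim\fg^{\mp}$, which is exactly what hyperbolicity (the absence of a central part) supplies, and which would break down without the assumption $G=H$, since then the two ambient dimensions need not coincide.
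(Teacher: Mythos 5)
Your proposal is correct and follows essentially the same route as the paper: the complementarity $\dim\fg^{\pm}=\dim G-\dim\fg^{\mp}$ coming from hyperbolicity, together with $G=H$, upgrades the one-sided hypothesis to the two-sided one, after which Theorem \ref{conjugatenecessary} applies. Your version merely makes explicit the role of the vanishing central part, which the paper leaves implicit.
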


\begin{proof}
In fact, if $G^+_1$ and $G_2^+$ are, respectively, the unstable subgroups of $\varphi_t$ and $\psi_t$ and, $G_1^-$ and $G_2^-$, respectively, their stable subgroups, then
$$\dim G_1^++\dim G_1^-=\dim G=\dim G_2^++\dim G^-_2$$
implying that $\dim G^+_1=\dim G^+_2$ if and only if $\dim G^-_1=\dim G_2^-$.
\end{proof}

\bigskip

We are now interested to prove the converse of Theorem \ref{conjugatenecessary}. Then we need the next result that shows that any conjugation between hyperbolic linear vector fields has to take the neutral element of $G$ to the neutral element of $H$.

\begin{lemma}\label{fixpoint}
  Let $\XC$ and $\YC$ be hyperbolic linear vector fields on $G$ and $H$, respectively. If the homeomorphism $\pi:G \rightarrow H$ conjugate $\varphi_t$ and $\psi_t$, then $\pi(e_G) =e_H$.
\end{lemma}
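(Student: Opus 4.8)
The plan is to exploit the fact that $e_G$ is the \emph{unique} fixed point of a hyperbolic linear flow, together with the elementary observation that a topological conjugacy carries fixed points of $\varphi_t$ to fixed points of $\psi_t$. Once both facts are in hand, $\pi(e_G)$ is forced to coincide with the unique fixed point of $\psi_t$, which is $e_H$. The first, purely formal, step is to note that each $\varphi_t$ is an automorphism, so $\varphi_t(e_G)=e_G$ for all $t$; evaluating the intertwining relation at $g=e_G$ then gives
\[
  \psi_t(\pi(e_G))=\pi(\varphi_t(e_G))=\pi(e_G), \qquad\mbox{for all }t\in\R,
\]
so that $\pi(e_G)$ is a fixed point of the flow $\psi_t$.

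The substantive ingredient is to show that a hyperbolic linear flow admits no fixed point other than the identity; applied to $\psi_t$ on $H$ this immediately yields $\pi(e_G)=e_H$. I would argue this for a general hyperbolic $\YC$ as follows. Since $\YC$ is hyperbolic we have $\fh^0=0$ and $H$ is nilpotent, so by Proposition \ref{subgroups} every $h\in H$ factors uniquely as $h=h^+h^-$ with $h^\pm\in H^\pm$. If $h$ is fixed, i.e. $\psi_t(h)=h$ for all $t$, then applying $\psi_t$ to this factorization and using the $\psi$-invariance of $H^+$ and $H^-$ together with the uniqueness of the decomposition gives $\psi_t(h^+)=h^+$ and $\psi_t(h^-)=h^-$ for all $t$. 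Theorem \ref{metriccharacterization}, which holds verbatim for the hyperbolic flow $\psi_t$ on $H$, then forces $\lim_{t\to\infty}\psi_t(h^-)=e_H$ and $\lim_{t\to-\infty}\psi_t(h^+)=e_H$; since both orbits are constant, $h^-=e_H$ and $h^+=e_H$, whence $h=e_H$.

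The proof is then complete: the second step shows $e_H$ is the only fixed point of $\psi_t$, and the first step shows $\pi(e_G)$ is such a fixed point. The only delicate point is the uniqueness of the fixed point of a hyperbolic flow, and even this is routine once one combines the product decomposition of Proposition \ref{subgroups} with the attractor/repeller characterization of Theorem \ref{metriccharacterization}; no further estimates are required.
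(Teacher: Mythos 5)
Your proof is correct and follows the same route as the paper: evaluate the conjugacy at $e_G$ to see that $\pi(e_G)$ is a fixed point of $\psi_t$, and then invoke the fact that a hyperbolic linear flow has the identity as its unique fixed point. The only difference is in how that uniqueness is justified: the paper cites item 5 of Proposition \ref{subgroups} (the singularities of $\XC$ lie in $G^0$, which is trivial in the hyperbolic case), whereas you re-derive it from the unique decomposition $h=h^+h^-$ together with the attractor/repeller characterization of Theorem \ref{metriccharacterization}; both justifications are valid.
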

\begin{proof}
  We observe that $e_G \in G$ and $e_H \in H$ are, by item 5. of Proposition \ref{subgroups}, the unique fixed points of flows $\varphi_t$ and $\psi_t$, respectively. Then, the following equality
  \[
    \psi_t(\pi(e_G)) = \pi(\varphi_t(e_G)) = \pi(e_G).
  \]
  shows the Lemma.
\end{proof}

Now we are in conditions to prove the converse of Theorem \ref{conjugatenecessary}.

\begin{theorem} \label{conjugatesufficiency}  
  Let us assume that $\XC$ and $\YC$ are hyperbolic. If $\varphi_t$ and $\psi_t$ are conjugated, then their stable and unstable subgroups have the same dimension.
\end{theorem}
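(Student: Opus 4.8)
The plan is to show that the conjugacy $\pi$ must respect the stable/unstable decomposition, carrying $G^-$ onto $H^-$ and $G^+$ onto $H^+$, and then to read off the two dimension equalities directly from Theorem \ref{dimensionequality}.

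First I would exploit the dynamical characterization of the stable and unstable subgroups provided by Theorem \ref{metriccharacterization}, together with the fact, established in Lemma \ref{fixpoint}, that $\pi(e_G)=e_H$. Concretely, if $g\in G^-$ then $\varphi_t(g)\to e_G$ as $t\to\infty$; applying the continuous map $\pi$ and using the conjugacy relation gives $\psi_t(\pi(g))=\pi(\varphi_t(g))\to\pi(e_G)=e_H$, so that $\pi(g)\in H^-$ by Theorem \ref{metriccharacterization} applied to $H$. Hence $\pi(G^-)\subseteq H^-$. Running the same argument for $\pi^{-1}$ (which conjugates $\psi_t$ and $\varphi_t$ and, by Lemma \ref{fixpoint}, sends $e_H$ to $e_G$) yields $\pi^{-1}(H^-)\subseteq G^-$, whence $\pi(G^-)=H^-$. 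The identical reasoning with $t\to-\infty$ gives $\pi(G^+)=H^+$.

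Next I would check that the restrictions $\pi|_{G^+}$ and $\pi|_{G^-}$ are genuine conjugacies of the restricted flows. Since $G^+$ and $H^+$ are closed, hence embedded, subgroups (item 4 of Proposition \ref{subgroups}), $\pi|_{G^+}\colon G^+\to H^+$ is a homeomorphism for the intrinsic topologies. For $g\in G^+$ one has $\varphi_t(g)=\varphi^+_t(g)\in G^+$ by invariance, while $\pi(g)\in H^+$ gives $\psi_t(\pi(g))=\psi^+_t(\pi(g))$; the conjugacy relation therefore reduces to $\pi|_{G^+}(\varphi^+_t(g))=\psi^+_t(\pi|_{G^+}(g))$, so $\varphi^+_t$ and $\psi^+_t$ are conjugated. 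Theorem \ref{dimensionequality} then yields $\dim\fg^+=\dim\fh^+$, i.e. the unstable subgroups share the same dimension, and the analogous argument on $G^-$ gives the equality of the stable dimensions.

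The main obstacle is the first step: the whole argument rests on knowing that a topological conjugacy must preserve the asymptotic behaviour that distinguishes $G^+$ from $G^-$. This is precisely where hyperbolicity is indispensable — it makes Theorem \ref{metriccharacterization} available and forces $e_G$ and $e_H$ to be the \emph{unique} fixed points, so that the convergence $\varphi_t(g)\to e_G$ is transported by $\pi$ into convergence $\psi_t(\pi(g))\to e_H$ toward the distinguished fixed point of $\psi_t$. Without this identification of the limit point, the transported orbit could not be recognized as lying in $H^-$ or $H^+$, and the argument would not close.
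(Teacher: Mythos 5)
Your proposal is correct and follows essentially the same route as the paper: both use Lemma \ref{fixpoint} and the dynamical characterization of Theorem \ref{metriccharacterization} to show $\pi(G^{\pm})=H^{\pm}$, then restrict $\pi$ and invoke Theorem \ref{dimensionequality}. Your extra remark that the restrictions are homeomorphisms for the intrinsic topologies because the subgroups are closed (item 4 of Proposition \ref{subgroups}) is a small point the paper passes over silently, but otherwise the arguments coincide.
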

\begin{proof}
  Let $\pi:G \rightarrow H$ be a homeomorphism such that
  \[
    \pi(\varphi_t(g)) = \psi_t(\pi(g)).
  \]
 From Theorem \ref{dimensionequality} it is sufficient to show that $\varphi_t^{\pm}$ and $\psi_t^{\pm}$ are conjugated. We will only show that $\varphi_t^{-}$ and $\psi_t^{-}$ are conjugated, since the unstable case is analogous. We begin by showing that $\pi(G^-) = H^-$. Take $g \in G^{-}$. From Lemma \ref{fixpoint} and Theorem \ref{metriccharacterization} it follows that
  \begin{eqnarray*}
    e_H = \pi(e_G)=\pi\left(\lim_{t\to\infty}\varphi_t(g)\right)=\lim_{t \to \infty} \pi(\varphi_t(g)) = \lim_{t \to \infty}\psi_t(\pi(g)).
  \end{eqnarray*}
  Again by Theorem \ref{metriccharacterization} we get that $\pi(g) \in H^{-}$ showing that $\pi(G^{-}) \subset H^-$. Analogously we show that $\pi^{-1}(H^{-}) \subset G^-$ and consequently that $\pi(G^-)=H^-$. If we consider the restriction $\pi_s:=\pi|_{G^-}$ we have that $\pi_s$ is a homeomorphism between $G^-$ and $H^-$ and it certainly conjugates $\varphi_t^-$ and $\psi_t^-$ which from Theorem \ref{dimensionequality} implies that the stables subgroups of $\varphi_t$ and $\psi_t$ have the same dimension.
\end{proof}

\begin{remark}
  Someone can easily observe that Theorems \ref{conjugatenecessary} and \ref{conjugatesufficiency} are versions to Lie group $G$ of well known Theorems of topological conjugacy in $\mathbb{R}^n$( see for example section 4.7 in \cite{robinson}).
\end{remark}

\section{Lyapunov stability}

In this section we will show that the stability properties of a linear flow on a Lie group $G$ behaves in the same way as the one of the linear flow on the Lie algebra $\fg$ induced by the derivation $\DC$.

In order to characterize the stability, let us define the Lyapunov exponent at $g \in G$ in direction to $v \in T_gG$ by
\[
  \lambda(g,v)= \limsup_{t \to \infty} \frac{1}{t} \mathrm{log}(\|(d\varphi_t)_g(v)\|),
\]
where the norm $\| \cdot \|$ is given by the left invariant metric.

Our next step is to show the invariance of the Lyapunov exponent. In fact, since that $\varphi_t \circ L_g = L_{\varphi_t(g)} \circ \varphi_t$, it follows that
\[
  (d\varphi_t)_g(v) = (d\varphi_t)_g((dL_g)_e\circ (dL_{g^{-1}})_g(v)) = (dL_{\varphi_t(g)})_e\circ(d\varphi_t)_g((dL_{g^{-1}})_g(v)).
\]
As $\|\cdot \|$ is a left invariant norm we have that
\begin{eqnarray*}
  \lambda(g,v)
  & = & \limsup_{t \to \infty} \frac{1}{t} \log(\|(dL_{\varphi_t(g)})_e\circ(d\varphi_t)_g((dL_{g^{-1}})_g(v))\|)\\
  & = & \limsup_{t \to \infty} \frac{1}{t} \log(\|(d\varphi_t)_e((dL_{g^{-1}})_g(v))\|)\\
  & = & \lambda(e,(dL_{g^{-1}})_g(v)).
\end{eqnarray*}
It is clear that for $v \in \fg$ we obtain  $\lambda(g,v(g)) = \lambda(e,v(e))$. In other words, taking $v \in \fg$ we obtain
\[
  \lambda(e,v) = \limsup_{t \to \infty} \frac{1}{t} \log(\|\rme^{t\mathcal{D}}(v)\|).
\]

Our next Lemma is similar to well know result for Lyapunov exponent on $\mathbb{R}^n$.

\begin{lemma}
  Let $u, v \in \fg$, then $\lambda(e, u+v) \leq \max\{ \lambda(e,u), \lambda(e,v)\}$ and the equality is true if $\lambda(e,u) \neq \lambda(e,v)$.
\end{lemma}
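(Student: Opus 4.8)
The plan is to work entirely with the expression $\lambda(e,v)=\limsup_{t\to\infty}\frac{1}{t}\log(\|\rme^{t\DC}v\|)$ derived immediately above the statement, so that the assertion reduces to the classical subadditivity property of Lyapunov exponents for the linear flow $\rme^{t\DC}$ on $\fg$. No Lie-theoretic input beyond that already-computed formula should be necessary.

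First I would prove the inequality. The triangle inequality gives $\|\rme^{t\DC}(u+v)\|\le\|\rme^{t\DC}u\|+\|\rme^{t\DC}v\|\le 2\max\{\|\rme^{t\DC}u\|,\|\rme^{t\DC}v\|\}$, hence $\frac{1}{t}\log(\|\rme^{t\DC}(u+v)\|)\le \frac{\log 2}{t}+\max\{\frac{1}{t}\log\|\rme^{t\DC}u\|,\frac{1}{t}\log\|\rme^{t\DC}v\|\}$. Taking $\limsup$ as $t\to\infty$ and using that $\frac{\log 2}{t}\to 0$ together with the elementary identity $\limsup_t\max\{a_t,b_t\}=\max\{\limsup_t a_t,\limsup_t b_t\}$ yields $\lambda(e,u+v)\le\max\{\lambda(e,u),\lambda(e,v)\}$.

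For the equality I would assume without loss of generality that $\lambda(e,u)>\lambda(e,v)$, and note first that $\lambda(e,-v)=\lambda(e,v)$ since $\|\rme^{t\DC}(-v)\|=\|\rme^{t\DC}v\|$. The key move is to apply the inequality just proved not to $u+v$ but to the decomposition $u=(u+v)+(-v)$, which gives $\lambda(e,u)\le\max\{\lambda(e,u+v),\lambda(e,-v)\}=\max\{\lambda(e,u+v),\lambda(e,v)\}$. Because $\lambda(e,v)<\lambda(e,u)$, the maximum cannot be attained at $\lambda(e,v)$, forcing $\lambda(e,u+v)\ge\lambda(e,u)$. Combining this with the inequality $\lambda(e,u+v)\le\max\{\lambda(e,u),\lambda(e,v)\}=\lambda(e,u)$ gives $\lambda(e,u+v)=\lambda(e,u)=\max\{\lambda(e,u),\lambda(e,v)\}$.

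The step I expect to require the most care is the lower bound in the equality case: a direct estimate of $\|\rme^{t\DC}(u+v)\|$ from below is awkward because $\limsup$ behaves badly under subtraction, so one cannot simply write $\|\rme^{t\DC}(u+v)\|\ge\|\rme^{t\DC}u\|-\|\rme^{t\DC}v\|$ and take limits. The device that resolves this is precisely to feed $u=(u+v)+(-v)$ back into the already-established subadditivity, turning the needed lower bound into an automatic consequence. A secondary, purely technical point is the commutation of $\limsup$ with $\max$ in the first step, which holds because $\max\{a_t,b_t\}$ dominates each of $a_t,b_t$ and is dominated by $\max\{\limsup_t a_t,\limsup_t b_t\}+\varepsilon$ for all large $t$; I would record this explicitly rather than cite it.
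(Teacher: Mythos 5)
Your proof is correct. Note that the paper itself supplies no proof of this lemma at all --- it is stated with the remark that it is ``similar to [a] well known result for Lyapunov exponent on $\mathbb{R}^n$'' and left uncited and unproved --- so there is nothing to compare against; what you have written is exactly the standard argument the authors are implicitly invoking: the triangle inequality gives subadditivity, and the decomposition $u=(u+v)+(-v)$ fed back into that subadditivity yields the reverse inequality when $\lambda(e,u)\neq\lambda(e,v)$, neatly avoiding the problem of bounding $\|\rme^{t\DC}(u+v)\|$ from below directly. The only point worth recording in addition is the degenerate case $u+v=0$ (or $u=0$ or $v=0$), where one adopts the convention $\lambda(e,0)=-\infty$ so that both assertions hold vacuously.
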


This lemma will be used in the proof of forthcoming characterization of Lyapunov exponents. Before that we need to consider another decomposition of the Lie algebra $\fg$. Let us denote by $\lambda_1, \ldots, \lambda_k$ the $k$ distinct values in of the real parts of the derivation $\DC$. We have then that
$$
  \fg=\bigoplus_{i=1}^k\fg_{\lambda_i}, \;\;\mbox{ where }\;\;\fg_{\lambda_i}:=\bigoplus_{\alpha; \mathrm{Re}(\alpha)=\lambda_i}\fg_{\alpha}
$$

\begin{theorem}\label{lyapunovexponent}
It holds that
$$\lambda(e, v)= \lambda \;\;\;\Leftrightarrow\;\;\;v\in\fg_{\lambda}:=\bigoplus_{\alpha; \mathrm{Re}(\alpha)=\lambda}\fg_{\alpha}.$$
\end{theorem}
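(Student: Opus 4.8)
The plan is to work entirely with the induced flow $\rme^{t\DC}$ on $\fg$, to which the computation preceding the statement already reduces us: $\lambda(e,v)=\limsup_{t\to\infty}\frac1t\log\|\rme^{t\DC}v\|$. I would prove the two implications separately, the substantive one being $v\in\fg_\lambda\Rightarrow\lambda(e,v)=\lambda$.

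For this direction, first observe that $\fg_\lambda=\bigoplus_{\mathrm{Re}(\alpha)=\lambda}\fg_\alpha$ is a real $\DC$-invariant subspace, since it is closed under complex conjugation ($\overline{\fg_\alpha}=\fg_{\bar\alpha}$ and $\mathrm{Re}(\bar\alpha)=\mathrm{Re}(\alpha)$). Hence $B:=\DC|_{\fg_\lambda}-\lambda\,\id$ is a real operator whose spectrum is purely imaginary, and for $v\in\fg_\lambda$ one has $\rme^{t\DC}v=\rme^{\lambda t}\rme^{tB}v$, so that $\frac1t\log\|\rme^{t\DC}v\|=\lambda+\frac1t\log\|\rme^{tB}v\|$. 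It therefore suffices to show $\frac1t\log\|\rme^{tB}v\|\to 0$. Putting $B$ in real Jordan form, its purely imaginary spectrum yields an integer $m\ge 0$ and a constant $C>0$ with $\|\rme^{tB}\|\le C(1+|t|)^m$ for all $t\in\R$; the upper bound gives $\limsup\le\lambda$. The lower bound is the delicate point: applying the same estimate to $-B$ gives $\|v\|=\|\rme^{-tB}\rme^{tB}v\|\le C(1+t)^m\|\rme^{tB}v\|$, whence $\|\rme^{tB}v\|\ge\|v\|/(C(1+t)^m)$ and $\frac1t\log\|\rme^{tB}v\|\to 0$. Thus the $\limsup$ is in fact a limit equal to $\lambda$.

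For the converse I would use the decomposition $\fg=\bigoplus_{i=1}^k\fg_{\lambda_i}$ together with the preceding Lemma. Writing $v=\sum_{i=1}^k v_i$ with $v_i\in\fg_{\lambda_i}$, the first part gives $\lambda(e,v_i)=\lambda_i$ for every nonzero $v_i$, and these values are pairwise distinct; iterating the Lemma (which gives equality in $\lambda(e,u+w)\le\max\{\lambda(e,u),\lambda(e,w)\}$ precisely when the two exponents differ) then yields $\lambda(e,v)=\max\{\lambda_i:v_i\neq 0\}$. Consequently $\lambda(e,v)=\lambda$ holds exactly when the highest nonzero homogeneous component of $v$ lies in $\fg_\lambda$, which together with the first direction identifies $\fg_\lambda$ as the Lyapunov space of exponent $\lambda$.

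The main obstacle is the single-block estimate of the first paragraph: the upper bound $\|\rme^{tB}\|\le C(1+|t|)^m$ is routine from the nilpotent-plus-rotation structure of an operator with purely imaginary spectrum, but pinning the $\limsup$ exactly at $\lambda$ (rather than at some value below it) requires the two-sided polynomial control obtained by also bounding $\rme^{-tB}$. Everything else is bookkeeping with the grading and the additivity Lemma.
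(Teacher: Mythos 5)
Your forward direction ($v\in\fg_\lambda\Rightarrow\lambda(e,v)=\lambda$) is correct and is in fact executed more carefully than in the paper: the paper factors $\rme^{t\DC}v=\rme^{\alpha t}\rme^{t(\DC-\alpha I)}v$ and invokes nilpotency, which only makes sense when $v$ lies in a single generalized eigenspace $\fg_\alpha$, whereas your real operator $B=\DC|_{\fg_\lambda}-\lambda\,\id$ with the two-sided bound $\|\rme^{\pm tB}\|\le C(1+|t|)^m$ treats an arbitrary $v\in\fg_\lambda$ (components in several $\fg_\alpha$ sharing the real part $\lambda$) in one stroke, and your remark that the lower bound via $\rme^{-tB}$ is the point that pins the $\limsup$ at exactly $\lambda$ is well taken. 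The converse is where there is a genuine gap, and it is one you share with the paper. What you actually prove, correctly, is the formula $\lambda(e,v)=\max\{\lambda_i: v_i\neq 0\}$; but your closing sentence then asserts that this ``identifies $\fg_\lambda$ as the Lyapunov space of exponent $\lambda$,'' which is not the implication $\lambda(e,v)=\lambda\Rightarrow v\in\fg_\lambda$ that the theorem claims. That implication is false: on $\fg=\R^2$ with $\DC=\mathrm{diag}(-1,-2)$ and $v=(1,1)$, your own formula gives $\lambda(e,v)=-1$, yet $v\notin\fg_{-1}=\R\times\{0\}$. In general $\{v:\lambda(e,v)=\lambda_i\}$ equals $\bigoplus_{\lambda_j\le\lambda_i}\fg_{\lambda_j}\setminus\bigoplus_{\lambda_j<\lambda_i}\fg_{\lambda_j}$, which coincides with $\fg_{\lambda_i}\setminus\{0\}$ only when $\lambda_i$ is the smallest exponent.

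The paper's proof of the converse conceals the same error in a different way: from $v\notin\fg_{\lambda_1}$ it writes $v=v_2+\cdots+v_k$, silently discarding the $\fg_{\lambda_1}$-component of $v$, which need not vanish. So your computation is the right one and in fact exposes the defect in the statement; what is missing from your write-up is the recognition that the displayed equivalence must be weakened --- either to the flag description above, or to the assertion that the set of attained Lyapunov exponents is exactly $\{\lambda_1,\dots,\lambda_k\}$, which is all that is used later when proving that asymptotic stability of $e$ is equivalent to $G=G^-$. As written, neither your argument nor the paper's establishes the implication $\lambda(e,v)=\lambda\Rightarrow v\in\fg_\lambda$, because it does not hold.
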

\begin{proof}
  We first suppose that $v \in \fg_{\lambda}$, then
  \begin{eqnarray*}
   \lambda(e, v)= \limsup_{t \to \infty} \frac{1}{t} \log(\|e^{t\DC}(v)\|)
    & = &  \limsup_{t \to \infty} \frac{1}{t} \log(\|e^{\alpha t}e^{(\DC-\alpha I)}(v)\|)\\
    & = & \mathrm{Re}(\alpha) + \limsup_{t \to \infty} \frac{1}{t} \log(\|e^{(\DC-\alpha I)}(v)\|).
  \end{eqnarray*}
  Since $\rme^{t(\DC-\alpha I)}(v) = \sum_{i=0}^d \frac{t^{i}}{i!}(\DC - \alpha I)^{i}(v)$ is a polynomial, it follows that
    \[
    \limsup_{t \to \infty} \frac{1}{t} \log(\|e^{(\DC-\alpha I)}(v)\|) = 0
  \]
 which gives us $\lambda(e,v) = \mathrm{Re}(\alpha)=\lambda$ as stated.

  Conversely, suppose that $\lambda(e,v) = \lambda$ and that $v \not\in \fg_{\lambda}$. Assume w.l.o.g. that $\lambda=\lambda_1$ and write $v = v_2 + v_3 + \ldots + v_k$ with $v_i \in \fg_{\lambda_i}$ for $i=2, \ldots, k$. Since $\lambda(e,v_i)=\lambda_i \neq\lambda_j= \lambda(e,v_j)$, for $ i, j= 1, \ldots, n$ with $i\neq j$, the above lemma assures that
  \begin{eqnarray*}
    \lambda_1
    & = & \lambda(e,v) = \lambda(e, v_2 + v_3 + \ldots + v_k)\\
    & = & \max\{\lambda(e,v_2), \lambda(e,v_3),\ldots, \lambda(e, v_k)\}\\
    & = & \max\{\lambda_2, \lambda_3,\ldots, \lambda_k\},
  \end{eqnarray*}
  where for the last equality we used the first part of Theorem. Since $\lambda_1\neq\lambda_i$ for $i=2, \ldots, k$ we have a contradiction.
\end{proof}

We follow by introducing the version of stability of the system (\ref{syst1}) on a Lie group for (see Definition 1.4.6 in \cite{coloniuskliemann}).

\begin{definition}
  Let $g\in G$ be a fixed point of $\mathcal{X}$. We say that $g$ is
  \begin{itemize}
    \item[1)]{\bf stable} if for all $g$-neighborhood $U$ there is a $g$-neighborhood $V$ such that $\varphi_t(V)\subset U$ for all $t\geq 0$;
    \item[2)] {\bf asymptotically stable} if it is stable and there exists a $g$-neighborhood $W$ such that $\lim_{t\rightarrow\infty}\varphi_t(x)=g$ whenever $x\in W$;
    \item[3)] {\bf exponentially stable} if there exist $c, \mu$ and a $g$-neighborhood $W$ such that for all $x\in W$ it holds that
    $$\varrho(\varphi_t(x), g)\leq c\rme^{-\mu t}\varrho(x, g),\;\;\;\;\mbox{ for all }\;\;t\geq 0;$$
    \item[4)] {\bf unstable} if it is not stable.
  \end{itemize}
\end{definition}

We should notice that, since property 3) is local, it does not depend on the metric that we choose on $G$. %Because of that we will assume from here on that $\varrho$ is a left invariant Riemmanian metric.

Next we prove a technical lemma that will be needed for the main results of this section.

\begin{lemma}
\label{conjugation}
Let $\XC$ and  $\YC$ be linear vector fields on the Lie groups $G$ and $H$, respectively, and $\pi:G\rightarrow H$ be a continuous map that commutates the linear flows of $\XC$ and $\YC$. If the fixed point $g$ of $\XC$ is stable (asymptotically stable) and there is a $g$-neighborhood $U$ such that $V=\pi(U)$ is open in $H$ and the restriction $\pi|_{U}$ is a homeomorphism, then the fixed point $\pi(g)$ of $\YC$ is stable (asymptotically stable). Moreover, if $\pi$ is a covering map the converse also holds.
\end{lemma}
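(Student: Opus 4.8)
The plan is to transport each stability property across $\pi$ by directly unwinding the definitions, using the intertwining relation $\pi\circ\varphi_t=\psi_t\circ\pi$ together with the local homeomorphism hypothesis. First I would fix the fixed point $g$ of $\XC$ and set $h:=\pi(g)$; since $\pi$ commutes the flows, $\psi_t(h)=\psi_t(\pi(g))=\pi(\varphi_t(g))=\pi(g)=h$, so $h$ is indeed a fixed point of $\YC$. The given $g$-neighborhood $U$ with $V=\pi(U)$ open and $\pi|_U$ a homeomorphism onto $V$ is the bridge: I would shrink all neighborhoods of $g$ into $U$ at the start so that $\pi$ is invertible on everything in sight.

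For \textbf{stability}: given an arbitrary $h$-neighborhood $U'$, I would replace it by $U'\cap V$, which is still an $h$-neighborhood contained in $V$. Then $(\pi|_U)^{-1}(U'\cap V)$ is a $g$-neighborhood, to which stability of $g$ gives a $g$-neighborhood $V_0$ with $\varphi_t(V_0)\subset (\pi|_U)^{-1}(U'\cap V)$ for all $t\geq 0$; I may also intersect $V_0$ with $U$. Applying $\pi$ and using the intertwining relation yields $\psi_t(\pi(V_0))=\pi(\varphi_t(V_0))\subset U'\cap V\subset U'$, so $\pi(V_0)$ is the desired $h$-neighborhood witnessing stability of $h$. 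The one subtlety I would check is that $\varphi_t(V_0)$ stays inside $U$ so that I am entitled to push it through $\pi$; this is exactly why I intersect the target with $V$ and shrink $V_0$ into $U$.

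For \textbf{asymptotic stability}, I would add to the above the attraction clause. Stability of $h$ is already established; for the attracting neighborhood, take the $g$-neighborhood $W$ from asymptotic stability of $g$, shrink it into $U$, and put $W':=\pi(W\cap U)$. For $y\in W'$ write $y=\pi(x)$ with $x\in W\cap U$; then $\varphi_t(x)\to g$, and by continuity of $\pi$ one gets $\psi_t(y)=\pi(\varphi_t(x))\to\pi(g)=h$, giving the attraction. For the \textbf{converse} under the extra hypothesis that $\pi$ is a covering map, I would run the same argument with the roles of $G$ and $H$ exchanged: a covering map is a local homeomorphism and is open, so about $g$ there is a neighborhood on which $\pi$ restricts to a homeomorphism onto an open set, which furnishes the local inverse needed to pull the stability (resp.\ asymptotic stability) of $h$ back to $g$; the attraction direction uses continuity of the local inverse and the intertwining relation in the form $\varphi_t\circ(\pi|_U)^{-1}=(\pi|_U)^{-1}\circ\psi_t$ near the fixed point.

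The main obstacle is purely bookkeeping about neighborhoods: ensuring that every orbit segment I wish to transport actually remains inside the set $U$ on which $\pi$ is a homeomorphism, so that applying $\pi$ or its local inverse is legitimate and the flows genuinely intertwine there. This is why the hypothesis localizes $\pi|_U$ to a homeomorphism onto the open set $V$, and why, in the converse, the covering-map assumption is needed to guarantee a bona fide local inverse around the fixed point rather than merely a continuous commuting map.
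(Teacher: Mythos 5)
Your forward direction (deducing stability and asymptotic stability of $\pi(g)$ from that of $g$) is correct and is essentially the paper's argument: shrink the target neighborhood into $V$, pull it back through $\pi|_U$, apply stability of $g$, and push forward using $\pi\circ\varphi_t=\psi_t\circ\pi$. Note that pushing forward through $\pi$ is always legitimate because the intertwining relation holds globally on $G$; the only point to check is that the images of the relevant neighborhoods are open, which your shrinking into $U$ handles.

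The converse is where you have a genuine gap. You justify the covering hypothesis as supplying ``a bona fide local inverse around the fixed point'' and propose to intertwine via $\varphi_t\circ(\pi|_U)^{-1}=(\pi|_U)^{-1}\circ\psi_t$. But a local inverse around the fixed point is already guaranteed by the hypothesis of the direct implication, so if that were all that is needed the converse would hold without the covering assumption. Moreover, the displayed relation is exactly what fails in general: from $\pi(\varphi_t(x))=\psi_t(\pi(x))\in U'$ you may only conclude $\varphi_t(x)\in\pi^{-1}(U')$, and a priori $\varphi_t(x)$ could lie in a preimage component other than the one containing $g$ --- that is, the $\varphi$-orbit could leave $U$, travel far from $g$, and still project into a small neighborhood of $\pi(g)$. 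The covering hypothesis is used precisely to rule this out: choosing $U'$ evenly covered, $\pi^{-1}(U')$ is a disjoint union of open sheets; the orbit $t\mapsto\varphi_t(x)$, $t\geq 0$, is connected, is contained in that disjoint union, and starts in the sheet $U$ containing $g$, hence stays in $U$ for all $t\geq 0$. This disjoint-sheets-plus-connectedness step is the heart of the paper's proof of the converse and is missing from your sketch.
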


\begin{proof}
Let us assume that $g$ is stable for $\XC$ and let $U'$ be a $\pi(g)$-neighborhood. By the property of $\pi$ around $g$, there exists a $g$-neighborhood $U$ such that $\pi$ restricted to $U$ is a homeomorphism and $\pi(U)\subset U'$. By the stability, there exists a $g$-neighborhood $V$ such that $\varphi_t(V)\subset U$ for all $t\geq 0$. Consequently $V'=\pi(V)$ is a $\pi(g)$-neighborhood and it holds that
$$\varphi'_t(V')=\varphi'_t(\pi(V))=\pi(\varphi_t(V))\subset\pi(U)\subset U', \;\;\;\mbox{ for all }\;\;t\geq 0$$
showing that $\pi(g)$ is stable for $\YC$.

If $g$ is asymptotically stable, there is a $g$-neighborhood $W$ such that $\lim_{t\rightarrow\infty}\varphi_t(x)=g$ for any $x\in W$. We can assume w.l.o.g. that $W$ is small enough such that $\pi$ restricted to $W$ is a homeomorphism. Then $W'=\pi(W)$ is a $\pi(g)$-neighborhood and
$$\lim_{t\rightarrow\infty}\varphi'_t(\pi(x))=\lim_{t\rightarrow\infty}\pi(\varphi_t(x))=\pi(\lim_{t\rightarrow\infty}\varphi_t(x))=\pi(g)$$
showing that $\pi(g)$ is asymptotically stable for $\YC$.

Let us assume now that $\pi$ is a covering map and that $\pi(g)$ is stable for $\YC$. Since $\pi$ is a covering map, there is a distinguished $\pi(g)$-neighborhood $U'$, that is, $\pi^{-1}(U')=\bigcup_{\alpha}U_{\alpha}$ is a disjoint union in $G$ such that $\pi$ restricted to each $U_{\alpha}$ is a homeomorphism onto $U'$. Let $U$ be a given $g$-neighborhood and assume w.l.o.g. that $U$ is the component of $\pi^{-1}(U')$ that contains $g$. By stability, there exists a $\pi(g)$-neighborhood $V'$ such that $\varphi'_t(V')\subset U'$ for all $t\geq 0$. Let $V\subset U$ be a $g$-neighborhood such that $\pi(V)\subset V'$. For $x\in V$ it holds that
$$\pi(\varphi_t(x))=\varphi'_t(\pi(x))\in\varphi_t'(V')\subset U', \;\;\;\mbox{ for all }\;\;t\geq 0$$
and consequently $\varphi_t(x)\in\pi^{-1}(U')$ for all $t\geq 0$. Since $\pi^{-1}(U')$ is a disjoint union and $x\in V\subset U$ we must have $\varphi_t(x)\in U$ for all $t\geq 0$. Being that $x\in V$ was arbitrary, we get that $\varphi_t(V)\subset U$ for all $t\geq 0$, showing that $g$ is stable for the linear vector field $\XC$.

The asymptotically stability follows, as above, from the fact that $\pi$ has a continuous local inverse.
\end{proof}

\bigskip
The following theorem characterizes, as for the Euclidian case, asymptotic and exponential stability at the identity $e\in G$ for a linear vector field in terms of the eigenvalues of $\DC$( see for instance Theorem 1.4.8 in \cite{coloniuskliemann}).

\begin{theorem}
For a linear vector field $\XC$  the following statements are equivalents:
  \begin{itemize}
    \item[(i)] The identity $e\in G$ is asymptotically stable;
    \item[(ii)] The identity $e\in G$ is exponentially stable;
    \item[(iii)] All Lyapunov exponents of $\varphi_t$ are negative;
    \item[(iv)] The stable subgroup $G^-$ satisfies $G=G^-$.
  \end{itemize}
\end{theorem}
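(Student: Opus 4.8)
The plan is to establish the equivalence $(iii)\Leftrightarrow(iv)$ directly and to close the cycle $(iv)\Rightarrow(ii)\Rightarrow(i)\Rightarrow(iv)$. The equivalence $(iii)\Leftrightarrow(iv)$ is immediate from Theorem \ref{lyapunovexponent}: the Lyapunov exponents of $\varphi_t$ are exactly the distinct real parts $\lambda_1,\dots,\lambda_k$ of the eigenvalues of $\DC$, so all of them are negative if and only if $\fg=\fg^-$, and since $G$ is connected with $G^-$ the connected subgroup integrating $\fg^-$, this is the same as $G=G^-$. For $(iv)\Rightarrow(ii)$ I would note that $G=G^-$ forces $\varphi_t=\varphi_t^-$ on all of $G$; then (\ref{metriccontract}), applied with $\varphi_t^-(e)=e$, gives $\varrho(\varphi_t(g),e)\le c^{-1}\rme^{-\mu t}\varrho(g,e)$ for all $g\in G$ and $t\ge0$, which is exponential stability at $e$ with $W=G$. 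The implication $(ii)\Rightarrow(i)$ is routine: the exponential estimate yields attraction, and since $c\rme^{-\mu t}\le c$ for $t\ge0$ it also yields stability after shrinking the initial neighborhood by the factor $c$.

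The substance of the theorem is $(i)\Rightarrow(iv)$, and the main obstacle is that the only natural intertwiner between the flow $\varphi_t$ on $G$ and the linear flow $\rme^{t\DC}$ on $\fg$ is $\exp$, via $\varphi_t\circ\exp=\exp\circ\,\rme^{t\DC}$; since $\exp$ is only a local diffeomorphism at $0$ and not a covering map, the converse part of Lemma \ref{conjugation} does not apply, and I must transfer stability ``downstairs'' to $\fg$ by hand. I would do this with a first-exit-time argument. Fix $r>0$ so that $\exp$ is a diffeomorphism on the closed ball $\overline{B_r}\subset\fg$ and set $\ep=r/2$. Assuming $e$ is (asymptotically, hence) stable, apply stability to $U=\exp(B_\ep)$ to obtain $\delta\in(0,\ep)$ with $\exp(\rme^{t\DC}X)\in U$ for all $t\ge0$ whenever $\|X\|<\delta$. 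If some such orbit left $B_\ep$, at its first exit time $t^*$ we would have $\|\rme^{t^*\DC}X\|=\ep$ with $\rme^{t^*\DC}X\in\overline{B_r}$, while $\exp(\rme^{t^*\DC}X)\in U=\exp(B_\ep)$ forces $\rme^{t^*\DC}X\in B_\ep$ by injectivity on $\overline{B_r}$, a contradiction. Hence $\|\rme^{t\DC}X\|<\ep$ for all $t\ge0$ and all $\|X\|<\delta$, and by linearity $\{\rme^{t\DC}:t\ge0\}$ is bounded on $\fg$.

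Boundedness of $\rme^{t\DC}$ already forces $\fg^+=\{0\}$ and forces $\DC|_{\fg^0}$ to be semisimple with purely imaginary eigenvalues (no nilpotent growth in the central block). It remains to rule out $\fg^0\ne\{0\}$, and here I would use the attraction part of (i). If $0\ne v\in\fg^0$, then $\rme^{t\DC}v$ lies on a relatively compact orbit bounded away from $0$; choosing $s>0$ small enough that $\exp(sv)$ lies in the attraction neighborhood, asymptotic stability gives $\exp(s\,\rme^{t\DC}v)=\varphi_t(\exp(sv))\to e$. Passing to a sequence $t_n\to\infty$ with $\rme^{t_n\DC}v\to w$ and $\|w\|>0$, continuity of $\exp$ yields $\exp(sw)=e$ for every sufficiently small $s$, which contradicts the local injectivity of $\exp$ at $0$ since $sw\ne0$ and $sw\to0$. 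Therefore $\fg^0=\{0\}$, so $\fg=\fg^-$ and $G=G^-$, which is $(iv)$ and closes the cycle. The delicate points to get right are the interplay of the two balls $B_\ep\subset\overline{B_r}$ in the exit-time step and the verification that central boundedness plus attraction genuinely excludes a nonzero central block; the expansion estimate (\ref{metricexpand}) on $G^+$ can alternatively be invoked to dispose of the case $\fg^+\ne\{0\}$ if one prefers to argue by contraposition.
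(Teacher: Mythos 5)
Your proposal is correct, and for the one nontrivial implication it takes a genuinely different route from the paper. The easy parts coincide: both arguments get $(iii)\Leftrightarrow(iv)$ from Theorem \ref{lyapunovexponent}, $(iv)\Rightarrow(ii)$ from the contraction estimate (\ref{metriccontract}), and $(ii)\Rightarrow(i)$ trivially. For $(i)\Rightarrow(iv)$ the paper proceeds in two steps: it first shows $\ker\DC=\{0\}$ (a nonzero $X\in\ker\DC$ would give fixed points $\exp(\delta X)$ arbitrarily close to $e$, contradicting attraction), concludes that the invertible derivation forces $\fg$ to be nilpotent, hence that $\exp:\fg\rightarrow G$ is a covering map, and then invokes the covering-map converse of Lemma \ref{conjugation} to push asymptotic stability down to $0\in\fg$ and quote the Euclidean theorem from \cite{coloniuskliemann}. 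You instead bypass the nilpotency detour entirely: your first-exit-time argument inside a ball on which $\exp$ is injective transfers stability to the linear flow using only that $\exp$ is a local diffeomorphism at $0$ intertwining $\varphi_t$ with $\rme^{t\DC}$, and it is sound --- at the first exit time from $B_\ep$ the point lies in $\overline{B_r}$, where injectivity of $\exp$ forces the contradiction, and homogeneity then bounds the semigroup $\{\rme^{t\DC}\}_{t\geq0}$. From there your linear-algebra analysis (boundedness kills $\fg^+$ and forces semisimplicity on $\fg^0$; attraction plus the recurrent, bounded-away-from-zero central orbits kills $\fg^0$ via $\exp(sw)=e$ for all small $s$ with $w\neq0$) is a correct hands-on replacement for the citation of the Euclidean result. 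What your approach buys is self-containedness and slightly greater generality in method --- you never need $\exp$ to be a covering map nor the converse half of Lemma \ref{conjugation}; what the paper's approach buys is brevity, since the spectral analysis is outsourced, and as a by-product it records the structural fact that asymptotic stability forces $G$ to be nilpotent (which your argument also yields a posteriori, since $\fg=\fg^-$ means $\DC$ is invertible, but which you do not need en route).
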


\begin{proof}
Since $G=G^-$ if and only if $\fg=\fg^-$ we have that (iii) and (iv) are equivalent, by Theorem \ref{lyapunovexponent}. Moreover, by equation (\ref{metriccontract}) we have that (iii) and (iv) implies (ii) and (ii) certainly implies (i). We just need to show, for instance, that (i) implies (iv), which we will do in two steps:

{\bf Step 1:} If $e\in G$ is asymptotically stable, $G$ is nilpotent;

In fact, let $U$ be a neighborhood of $0\in\fg$ such that $\exp$ restricted to $U$ is a diffeomorphism and such that $\exp(U)\subset W$. For any $X\in\ker\DC$ let $\delta>0$ small enough such that $g=\exp(\delta X)\in W$. Since $\varphi_t(g)=g$ for any $t\in\R$ the asymptotic assumption implies that we must have $g=e$ and consequently that $X=0$ showing that $\ker\DC=\{0\}$. The derivation $\DC$ is then invertible which implies that $\fg$ is a nilpotent Lie algebra and so $G$ is a nilpotent Lie group.

{\bf Step 2:} If $e\in G$ is asymptotically stable, $G=G^-$.

The derivation $\DC$ on the Lie algebra $\fg$ can be identified with the linear vector field on $\fg$ given by $X\mapsto\DC(X)$. Its associated linear flow is given by $\rme^{t\DC}$. By the above step, $G$ is a nilpotent Lie group which implies that $\exp:\fg\rightarrow G$ is a covering map. Moreover, since $\varphi_t\circ\exp=\exp\circ\,\rme^{t\DC}$ we have that $e\in G$ is asymptotically stable if and only if $0\in\fg$ is asymptotically stable for the linear vector field induced by $\DC$.

By the results in \cite{coloniuskliemann} for linear Euclidian systems we have that $0\in\fg$ is asymptotically stable if and only if $\DC$ has only eigenvalues with negative real part, that is, $\fg=\fg^-$ implying that $G=G^-$ and concluding the proof.
\end{proof}

\begin{remark}
We should notice that the above result shows us that, as for linear Euclidian systems, local stability is equal to global stability. Moreover, in order for $e\in G$ be asymptotically stable for a linear vector field $\XC$ is necessary that for $G$ to be a simply connected nilpotent Lie group.
\end{remark}

The next result concerns the stability of a linear vector field.

\begin{theorem}
The identity $e\in G$ is stable for the linear vector field $\XC$ if $G=G^{-, 0}$ and $\DC$ restricted to $\fg^0$ is semisimple.
\end{theorem}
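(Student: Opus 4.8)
The plan is to reduce the statement to the corresponding linear stability question on the Lie algebra $\fg$ and then transport it back to $G$ through the exponential map, using Lemma \ref{conjugation}. The essential observation is that stability is a \emph{local} property at $e$: even though here we do not assume hyperbolicity, so that $G$ need be neither nilpotent nor simply connected, we only ever need $\exp$ to be a local diffeomorphism near $0\in\fg$, which always holds. This is exactly what makes the argument go through in the present non-hyperbolic setting without any global hypothesis on $G$.

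First I would consider the linear vector field on $\fg$ induced by $\DC$, namely $X\mapsto\DC(X)$, whose linear flow is $\rme^{t\DC}$ and for which $0\in\fg$ is a fixed point; this identification is the same one already used in the proof of the asymptotic stability theorem above. Recall the intertwining relation $\varphi_t\circ\exp=\exp\circ\,\rme^{t\DC}$, which says precisely that $\exp$ is a continuous map commuting the linear flow of the $\DC$-system on $\fg$ with that of $\XC$ on $G$. Since $(d\exp)_0=\id_{\fg}$, there are arbitrarily small neighborhoods $U$ of $0\in\fg$ on which $\exp|_U$ is a homeomorphism onto an open neighborhood of $e$. Thus Lemma \ref{conjugation}, applied with $\pi=\exp$ and the fixed point $0$, reduces the whole statement to proving that $0\in\fg$ is stable for the linear flow $\rme^{t\DC}$.

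This remaining assertion is a purely Euclidean statement about the one-parameter group $\rme^{t\DC}$ acting on the finite-dimensional vector space $\fg$. The hypothesis $G=G^{-, 0}$ translates as $\fg=\fg^-\oplus\fg^0$, so $\DC$ has no eigenvalue with positive real part, and this splitting is $\rme^{t\DC}$-invariant. On $\fg^-$ the eigenvalues have negative real part, so $\|\rme^{t\DC}|_{\fg^-}\|\to 0$ as $t\to\infty$ and is in particular bounded for $t\geq 0$; on $\fg^0$ the eigenvalues are purely imaginary and, by hypothesis, $\DC|_{\fg^0}$ is semisimple, so $\rme^{t\DC}|_{\fg^0}$ is conjugate to a direct sum of rotations and hence bounded for all $t$. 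Therefore $M:=\sup_{t\geq 0}\|\rme^{t\DC}\|<\infty$, and for any $\ep>0$ the choice $\delta=\ep/M$ gives $\|\rme^{t\DC}v\|<\ep$ whenever $\|v\|<\delta$ and $t\geq0$, which is exactly the stability of $0$. Alternatively, this is the classical characterization of Lyapunov stability for linear Euclidean systems and may simply be quoted (e.g.\ \cite{coloniuskliemann} or \cite{robinson}).

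The step I expect to require the most care is the direction in which Lemma \ref{conjugation} is invoked, namely passing from stability of $0$ on $\fg$ to stability of $e$ on $G$. One must verify that $\exp$ genuinely meets the local-homeomorphism hypothesis of that lemma at the fixed point and that shrinking $G$-neighborhoods of $e$ to $\exp$-images of $0$-neighborhoods does not lose information. Because $\exp$ is a local diffeomorphism at $0$ this is immediate, and it is worth stressing that, in contrast with the asymptotic stability result, no covering-map (hence no nilpotency or simple connectedness) hypothesis is needed here.
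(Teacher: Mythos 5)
Your proposal is correct and follows essentially the same route as the paper: reduce to the Euclidean system induced by $\DC$ on $\fg$, establish stability of $0$ there (the paper simply cites Theorem 4.7 of \cite{coloniuskliemann}, whereas you also supply the short boundedness argument for $\rme^{t\DC}$), and transport the conclusion to $G$ via Lemma \ref{conjugation} using that $\exp$ is a local diffeomorphism at $0$. Your remark that no covering-map hypothesis is needed in this direction matches the paper's use of the lemma.
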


\begin{proof}
First we note that $G=G^{-, 0}$ if and only if $\fg=\fg^{-, 0}$. By Theorem 4.7 in \cite{coloniuskliemann} for linear Euclidian systems, the conditions that $\fg=\fg^{-, 0}$ and that $\DC|_{\fg^0}$ is semisimple is equivalent to $0\in\fg$ be stable for the linear vector field induced by $\DC$. Since $\exp$ is a local diffeomorphism around $0\in\fg$ we have by Lemma \ref{conjugation} that $0\in\fg$ stable for $\DC$ implies that $e\in G$ is stable for $\XC$ concluding the proof.
\end{proof}

The next result gives us a partial converse of the above theorem.

\begin{theorem}
If $e\in G$ is stable for the linear vector field $\XC$ then $G=G^{-, 0}$. Moreover, if $\exp_{G^0}:\fg^0\rightarrow G^0$ is a covering map then $e\in G$ stable for $\XC$ implies also that $\DC|_{\fg^0}$ is semisimple.
\end{theorem}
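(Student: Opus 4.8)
The plan is to establish the two conclusions separately. For the first assertion it suffices to prove that $\fg^+=\{0\}$, because then $\fg=\fg^-\oplus\fg^0=\fg^{-,0}$ and, $G$ being connected, this forces $G=G^{-,0}$. I would argue by contradiction: suppose there were a nonzero $X\in\fg^+$. For small $s>0$ the elements $g_s=\exp_G(sX)$ lie in the subgroup $G^+$ (closed, hence embedded, by item 4 of Proposition \ref{subgroups}) and converge to $e$ as $s\to0$. Fix a bounded $e$-neighborhood $U$; by stability there is an $e$-neighborhood $V$ with $\varphi_t(V)\subset U$ for all $t\geq0$, and for $s$ small enough $g_s\in V$. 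Since $G^+$ is $\varphi$-invariant we have $\varphi_t(g_s)=\varphi_t^+(g_s)$, so inequality (\ref{metricexpand}) yields $\varrho(\varphi_t(g_s),e)\geq c\,\rme^{\mu t}\varrho(g_s,e)\to\infty$, contradicting $\varphi_t(g_s)\in U$. Hence $\fg^+=\{0\}$.

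For the second assertion I would first note that stability of $e$ in $G$ restricts to stability of $e$ in the closed invariant subgroup $G^0$: given an $e$-neighborhood $U^0=U\cap G^0$ of $G^0$, the neighborhood $V$ furnished by stability in $G$ yields $V^0=V\cap G^0$ with $\varphi_t^0(V^0)\subset U^0$ for all $t\geq0$, using that $\varphi_t$ preserves $G^0$. The next step is to transport this stability to the Lie algebra. Writing $\DC^0=\DC|_{\fg^0}$, the hypothesis that $\exp_{G^0}:\fg^0\to G^0$ is a covering map together with the intertwining $\varphi_t^0\circ\exp_{G^0}=\exp_{G^0}\circ\,\rme^{t\DC^0}$ lets me apply Lemma \ref{conjugation} to $\pi=\exp_{G^0}$; its covering-map converse, with $\exp_{G^0}(0)=e$, shows that $0\in\fg^0$ is stable for the linear vector field induced by $\DC^0$.

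Finally I would invoke the Euclidean theory (Theorem 4.7 of \cite{coloniuskliemann}): the origin is stable for a linear system precisely when all eigenvalues have nonpositive real part and those with zero real part are semisimple. Since every eigenvalue of $\DC^0$ has zero real part by the definition of $\fg^0$, stability of $0\in\fg^0$ forces $\DC^0$ to be semisimple, which is the claim. The steps I expect to demand the most care are the two transfer arguments: in the first part, verifying the submanifold structure of $G^+$ so that genuinely nontrivial points $g_s\neq e$ sit arbitrarily close to $e$ inside $G^+$ and (\ref{metricexpand}) applies; and in the second part, checking that the stability of $e$ really descends to $G^0$ with the subspace topology and that $\exp_{G^0}$ meets the exact hypotheses of the covering-map converse in Lemma \ref{conjugation}.
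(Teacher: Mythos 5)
Your proposal is correct and follows essentially the same route as the paper: the first claim is obtained from inequality (\ref{metricexpand}) by showing no nontrivial element of $G^+$ near $e$ can have a bounded forward orbit, and the second by restricting stability to the closed invariant subgroup $G^0$, transferring it to $0\in\fg^0$ via the covering-map converse of Lemma \ref{conjugation} applied to $\exp_{G^0}$, and invoking Theorem 4.7 of \cite{coloniuskliemann}. You merely supply more detail than the paper at the two transfer steps, which is welcome rather than a deviation.
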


\begin{proof}
By equation (\ref{metricexpand}) the only element in $G^+$ that have bounded positive $\XC$-orbit is the identity. Therefore, if $e\in G$ is stable then $G^+=\{e\}$ and consequently $G=G^{-, 0}$.

Since $G^0$ is $\varphi$-invariant, the linear flow $\XC$ induces a linear vector field $\XC_{G^0}$ on $G^0$ such that the associated linear flow is the restriction $(\varphi_t)|_{G^0}$. Moreover, being that $G^0$ is a closed subgroup, it is not hard to prove that $e\in G$ stable for $\XC$ implies $e\in G^0$ stable for the restriction $\XC_{G^0}$.

If we assume that $\exp_{G^0}$ is a covering map, we have by Lemma \ref{conjugation} that $e\in G^0$ stable for $\XC_{G^0}$ if and only if $0\in\fg^0$ stable for $\DC|_{\fg^0}$ which by Theorem 4.7 of \cite{coloniuskliemann} implies that $\DC|_{\fg^0}$ is semisimple.
\end{proof}

When $G$ is a nilpotent Lie group the subgroup $G^0$ is also nilpotent and so the map $\exp_{G^0}:\mathfrak{g}^0\rightarrow G^0$ is a covering map. We have then the following.

\begin{corollary}
  If $G$ is a nilpotent Lie group then $e\in G$ is stable if and only if $G=G^{-, 0}$ and $\mathcal{D}|_{\mathfrak{g}^0}$ is semisimple.
\end{corollary}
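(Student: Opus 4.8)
The plan is to derive the corollary directly from the two preceding theorems, using the nilpotency of $G$ only to verify the one extra hypothesis ($\exp_{G^0}$ a covering map) that the partial converse needs. So I would not prove anything from scratch; I would assemble the ``if'' part from the sufficiency theorem and the ``only if'' part from the partial converse.

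First I would dispose of sufficiency. If $G=G^{-,0}$ and $\DC|_{\fg^0}$ is semisimple, then the first of the two stability theorems above applies verbatim, since it carries no nilpotency assumption, and yields that $e\in G$ is stable. Thus the ``if'' direction is immediate.

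For the ``only if'' direction I would invoke the partial converse. If $e\in G$ is stable, that theorem already gives $G=G^{-,0}$ unconditionally, so it remains only to obtain the semisimplicity of $\DC|_{\fg^0}$. The same theorem supplies exactly this conclusion, provided $\exp_{G^0}:\fg^0\to G^0$ is a covering map, so the crux is to verify this last property. I would first check that $\fg^0$ is a subalgebra of $\fg$: by the bracket relation $[\fg_\alpha,\fg_\beta]\subset\fg_{\alpha+\beta}$ recalled in the Remark, whenever $\mathrm{Re}(\alpha)=\mathrm{Re}(\beta)=0$ one has $\mathrm{Re}(\alpha+\beta)=0$, so $[\fg^0,\fg^0]\subset\fg^0$. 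Since $G$ is nilpotent, $\fg$ is a nilpotent Lie algebra, hence so is its subalgebra $\fg^0$, and therefore $G^0$ is a connected nilpotent Lie group.

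Finally I would record the standard fact (the content of the paragraph preceding the corollary) that makes the argument close: for a connected nilpotent Lie group $G^0$, writing $G^0=\widetilde{G^0}/\Gamma$ with $\widetilde{G^0}$ its simply connected cover and $\Gamma$ a central discrete subgroup, the exponential $\exp_{\widetilde{G^0}}$ is a diffeomorphism, so $\exp_{G^0}$ factors as the covering projection $\widetilde{G^0}\to G^0$ composed with a diffeomorphism, whence $\exp_{G^0}$ is itself a covering map. With this in hand the partial converse yields that $\DC|_{\fg^0}$ is semisimple, completing the ``only if'' direction. I do not expect any genuine obstacle here; the only points requiring a line of care are the verification that $\fg^0$ is a subalgebra and the appeal to the exponential map of a nilpotent group, both of which are routine.
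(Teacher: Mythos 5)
Your proposal is correct and follows exactly the route the paper intends: the paper's justification is precisely the one-line observation preceding the corollary that $G$ nilpotent forces $G^0$ to be nilpotent, hence $\exp_{G^0}$ is a covering map, after which the two preceding stability theorems give the two implications. Your additional checks (that $\fg^0$ is a subalgebra via $[\fg_\alpha,\fg_\beta]\subset\fg_{\alpha+\beta}$, and the factorization of $\exp_{G^0}$ through the simply connected cover) merely fill in details the paper leaves implicit.
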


\begin{remark}
  Another example where we have that the stability of the linear vector field $\mathcal{X}$ on the neutral element implies that $\mathcal{D}|_{\mathfrak{g}^0}$ is semisimple is when $G$ is a solvable Lie group and $\exp:\fg\rightarrow \widetilde{G}$ is a diffeomorphism, where $\widetilde{G}$ is the simply connected covering of $G$.
\end{remark}

\section{Conclusion}

We conclude by observing that our work is an initial step for several studies. We explain this assertion with two important problems. First, since now we understand the topological conjugacy of linear systems of type (\ref{syst1}), the next natural step is to study the topological conjugacy of linear control systems (\ref{contsyst1}). Second, to study the concept of Morse index of the linear system (\ref{syst1}). However, here, it is necessary to observe that by Corollary \ref{compacityproperty}, the compacity necessary on $G$ to this study implies that the flow of (\ref{syst1}) has no expanding or contracting subgroups. Consequently a suitable homogenous space has to be considered in order to study the Morse index. To conclude, the similarity of results make us believe that one can show that several results, founded in classical literature of linear system in $\mathbb{R}^n$, are still true for the linear systems (\ref{syst1}) on Lie groups.


\begin{thebibliography}{99}

\bibitem{agrachev} \textsc{A.~A.~Agrachev and Y.L. Sachkov}; {\it Control Theory from the Geometric Viewpoint}, Springer-Verlag, 2004.

\bibitem{ayalatirao} \textsc{V.~Ayala and J.~Tirao};{\it Linear Control System and Controllability}, Proceedings of Symposia in Pure Mathematics (AMS) {\bf 64}(1999), 47-64.
\bibitem{ayalasanmartin} \textsc{V.~Ayala and L.~A.~B.~San Martin}; {\it Controllability Properties of a Class of Control System on Lie Groups}, Lectures Notes in Control and Information Sciences 258(2001), pp. 83-92.

\bibitem{ayaladasilva} \textsc{V.~Ayala and A.~Da Silva};{\it Controllability of linear systems on Lie groups with finite semisimple center}, submitted. 

%\bibitem{AyCoKl} \textsc{V.~Ayala, F.~Colonius and W.~Kliemann}, {\it On topological equivalence of linear flows with applications to bilinear control systems}, J.\ Dynam.\ Control Systems 13 (2007), pp.~337--362.%

\bibitem{biggsremsing} \textsc{R.~Biggs and C.~C.~Remsing}; {\it On the equivalence of control systems on Lie groups}, Communications in Mathematics, Vol. 23 (2015), No. 2, 119–129.

\bibitem{coloniuskliemann} \textsc{F.~Colonius and W.~Kliemann}; {\it Dynamical Systems and Linear Algebra}, American Mathematical Society, 2014.

%\bibitem{CoSa} \textsc{F.~Colonius and A.~J.~Santana}; {\it Stability and Topological Conjugacy for Affine Differential Equations}, Bol.\ Soc.\ Parana.\ Mat.  26 (2009), pp.~141--151.

%\bibitem{CoSa2} \textsc{F.~Colonius and A.~J.~Santana}; {\it Topological conjugacy for affine-linear flows and control systems},  Communications on Pure and Applied Analysis 10, (2001), pp.~847--857.%

\bibitem{dasilva} \textsc{A.~Da Silva}; {\it Controllability of linear systems on solvable Lie groups}, SIAM Journal on Control and Optimization 54 No. 1 (2016),
372-390.


\bibitem {DaAy2}\textsc{A.~Da Silva and V.~Ayala},  {\it Control sets of linear systems on Lie groups} submitted.

%\bibitem{HiSmDe} \textsc{M.~W.~Hirsch, S.~Smale and R.~L.~Devaney}, {\it Differential Equations, Dynamical Systems and an Introduction to Chaos}, Elsevier, London, 2004.%

\bibitem{jouan1} \textsc{P.~Jouan}; {\it Equivalence of Control System wiht Linear Systems on Lie Groups an Homogeneous Space}, ESAIM: Control Optim. Calc. Var. 16(2010), 956-973.

\bibitem{jouan2}\textsc{P.~Jouan}; {\it Controllability of Linear Systems on Lie Group}, Journal of Dynamical and Control System, Vol. 17, No.4, October 2011, 591-616.

\bibitem{jouandath} \textsc{P.~Jouan and M.~Dath}; {\it Controllability of Linear System on Low Dimensional Nilpotent and Solvable Lie Groups}, Journal of Dynamics and Control System, April 2016, Volume 22, Issue 2, pp 207-225. 

\bibitem{jurdjevic} \textsc{Jurdjevic};{\it Geometric Control Theory}, Cambridge University Press, 1977.

\bibitem{markus} \textsc{L.~Markus}; {\it Controllability of Multi-trajectories on Lie groups}, Proceedings of Dynamical System and Turbulence, Warwick, 1980, Lectures Notes in Mathematics {\bf 898}, 250-265.

%\bibitem{KaHA} \textsc{A.~Katok and B.~Hasselblatt}; {\it Introduction to the Modern Theory of Dynamical Systems}, 1st Edition, Cambridge University Press, Cambridge, 1996.%

%\bibitem{KaRoSa} \textsc{C.~Kawan, O.~G.~Rocio and A.~J.~Santana}; {\it On topological conjugacy of left invariant flows on semisimple and affine Lie groups}, Proyecciones,Chile, v. 30, n. 2, (2011), pp. 175-188.

\bibitem{robinson}\textsc{C.~Robinson},  {\it Dynamical Systems. Stability, Symbolic Dynamics, and Chaos}, 2nd Edition, CRC Press, London, 1999.%

%\bibitem{RoSaVe} \textsc{O.~G.~Rocio, A.~J.~Santana and M.~A.~Verdi}, {\it Semigroups of Affine Groups, Controllability of Affine Systems and Affine Bilinear Systems in {$\mathrm{S\lowercase{l}}(2,\mathbb{R})\rtimes\mathbb{R}^{2}$}}, SIAM J.\ Control Optim. 48 (2009), pp.~1080--1088.%

\bibitem {SM1}\textsc{L.~A.~B.~San Martin}; {\it Algebras de Lie}, Second Edition, Editora Unicamp, (2010).

%\bibitem {SM2}\textsc{L.~A.~B.~San Martin}; {\it Grupos de Lie}, unpublished, (2014).

\bibitem{sontag} \textsc{E.~D.~Sontag}; {\it Mathematical Control Theory. Deterministic finite-dimensional systems}. Springer-Verlag. New York, 1998.

%\bibitem {DaAy1}\textsc{Da Silva, A. J. and V. Ayala}, \emph{Controllability of linear systems on Lie groups with finite semisimple center}. submitted.

\end{thebibliography}
\end{document}